\def\@@insvline#1#2{{\setbox0\hbox{\m@th$#1\mathrm I$}  
  \rlap{\m@th$#1 \mkern 5mu  
  \vrule height.95\ht0 depth-.005\ht0 width.09\ht0 $}  
  {\mathrm #2} }}
\def\Q{\mathpalette\@@insvline{Q}}
  \newtheorem{defi}{Definition}
  \newcommand{\bd}{\begin{defi}} 
  \newcommand{\ed}{\end{defi}}
  \newtheorem{lemm}[defi]{Lemma}  
  \newcommand{\bl}{\begin{lemm}}
  \newcommand{\el}{\end{lemm}} 
  \newtheorem{theo}[defi]{Theorem}
  \newcommand{\bt}{\begin{theo}}
  \newcommand{\et}{\end{theo}}
  \newtheorem{cor}[defi]{Corollary}
  \newcommand{\bc}{\begin{cor}}
  \newcommand{\ec}{\end{cor}}
  \newtheorem{pro}[defi]{Proposition}
  \newcommand{\bp}{\begin{pro}}
  \newcommand{\ep}{\end{pro}}
  \newcommand{\bydef}{\overset{def}{=}}
  \def\proof{\@ifnextchar[\opargproof{\opargproof[\bf Proof \hfil\\ ]}}
  \def\opargproof[#1]{\par\noindent {\bf #1 }}
\DeclareFontFamily{OT1}{nice}{}
\DeclareFontShape{OT1}{nice}{m}{n}{<5> <6> <7> <8> <9> <10>
<12><10.95><14.4><17.28><20.74><24.88>callig15}{}
\DeclareFontFamily{U}{nice}{}
\DeclareFontShape{U}{nice}{m}{n}{<5> <6> <7> <8> <9> <10>
<12><10.95><14.4><17.28><20.74><24.88>callig15}{}
\DeclareSymbolFont{calligra}{U}{nice}{m}{n}
\DeclareSymbolFontAlphabet{\nice}{calligra}
\DeclareFontFamily{OT1}{cmdh}{}
\DeclareFontShape{OT1}{cmdh}{m}{n}{<10>cmdunh10}{}
\def\epsilon{\varepsilon}  
\def\phi{\varphi}
\newtheorem{thm}{Theorem}
\newtheorem{rmq}{Remark}
\newtheorem{lemma}{Lemma}
\newtheorem{defin}{Definition}
\newcommand{\norm}[1]{\left\Vert #1\right\Vert}
\newcommand\blfootnote[1]{%
  \begingroup
  \renewcommand\thefootnote{}\footnote{#1}%
  \addtocounter{footnote}{-1}%
  \endgroup}
\date{\today}
\author[H. Houamed]{Haroune Houamed}
\address{CNRS, LJAD, Université Co\^te d'Azur\\ Département de Mathématiques\\ Nice,  France}
\email{haroune.houamed@univ-cotedazur.fr}
\thanks{This Work has been done when the author was a PhD student in the University of Nice-C\^ote d'Azur-France, under the supervision of F.Planchon and P.Dreyfuss. In particular, the author would like to thank his supervisors for the accomplished work.}
\title{About some possible blow-up conditions for the 3-D Navier-Stokes equations}
\begin{document}

\begin{abstract}  In this paper, we study some conditions related to the question of the possible blow-up of regular solutions to the 3D Navier-Stokes equations. In particular, up to a modification in a proof of a very recent result from \cite{Isab}, we prove that if one component of the velocity remains small enough in a sub-space of $\dot{H}^{\frac{1}{2}}$ "almost" scaling invariant, then the 3D Navier Stokes is globally wellposed. In a second time, we investigate the same question under some conditions on one component of the vorticity and unidirectional derivative of one component of the velocity in some critical Besov spaces of the form $L^p_T(\dot{B}_{2,\infty}^{\alpha, \frac{2}{p}-\frac{1}{2}-\alpha})$ or $L^p_T(\dot{B}_{q,\infty}^{ \frac{2}{p}+\frac{3}{q}-2})$.\end{abstract}  
\maketitle
\blfootnote{{\it keywords:}
Incompressible Navier-Stokes Equations, Anisotropic Littlewood-Paley Theory, Blow-up criteria.}  
\blfootnote{AMS Subject Classification (2010): 35Q30, 76D03}

\section{Introduction}
In this work we are interested in the study of the possible blow-up for regular solutions to the 3D incompressible Navier stokes equations
\begin{center}
$(NS)\left\{\begin{array}{l}
 \partial_t u + u \cdot \nabla u - \Delta u + \nabla P=0, \; \; \; (t,x) \in \mathbb{R}^+ \times \mathbb{R}^3\\
  div\, u = 0 \\
  u_{|t=0}= u_0,
\end{array}\right.$
\end{center}
where the unknowns of the equations $u=(u^1,u^2,u^3)$, $P$ are respectively, the velocity and the pressure of the fluid. 
We recall that the set of the solutions to $(NS)$ is invariant under the transformation:
\begin{center}
$\begin{array}{cc}
u_{0,\lambda}(x) \overset{def}{=} \lambda u_0(\lambda x), & u_{\lambda}(t,x)  \overset{def}{=} \lambda u_{\lambda}(\lambda^2 t, \lambda x) .
\end{array}$
\end{center}
That is if $u(t,x)$ is a solution to $(NS)$ on $[0,T]\times \mathbb{R}^3$ associated to the initial data $u_0$ , then, for all $\lambda>0$, $ u_{\lambda}(t,x) $ is a solution to $(NS)$ on $[0,\lambda^{-2}T]\times \mathbb{R}^3$ associated to the initial data $ u_{0,\lambda}$.\\
It is well known that system $(NS)$ has at least one global weak solution with finite energy
\begin{equation}\label{L2-energy}
\norm {u(t)}_{L^2}^2 + 2\int_0^t \norm {\nabla u(t')}_{L^2}^2 dt' \leq \norm {u_0}_{L^2}^2.
\end{equation}
This result was proved first by J.Leray in \cite{Leray1}. In dimension three, uniqueness for such solutions stands to be an open problem. J.Leray proved also in his famous paper \cite{Leray1} that, for more regular initial data, namely for $u_0\in H^1(\mathbb{R}^3)$, $(NS)$ has a unique local smooth solution. That is, there exists $T^*>0$ and a unique maximal solution $u$ in $ L^{\infty}_{T^*}(H^1(\mathbb{R}^3)) \cap L^2_{T^*}(H^2(\mathbb{R}^3))$. The question of the behavior of this solution after $T^*$ remains to be also an open problem.\\
In order to give a ``formally" large picture, let us define the set
\begin{equation}\label{chi_T}
\chi_T \overset{def}{=} \bigg( (L^{\infty}_T L^2 \cap L^2_T H^1) \cdot (L^{\infty}_T L^2 \cap L^2_T H^1) \bigg)',
\end{equation}
where
$$(L^{\infty}_T L^2 \cap L^2_T H^1) \cdot (L^{\infty}_T L^2 \cap L^2_T H^1) \bydef \big\{ uv : \; (u,v)\in (L^{\infty}_T L^2 \cap L^2_T H^1) \times (L^{\infty}_T L^2 \cap L^2_T H^1)\big \}.$$
Multiplying $(NS)$ by $-\Delta u$, and integrating by parts yield
\begin{equation*}
\frac{1}{2}\frac{d}{dt} \norm {\nabla u}_{L^2}^2 + \norm {\nabla u}_{H^1}^2 = - \int_{\mathbb{R}^d} \big(\nabla u \cdot \nabla u \big| \nabla u \big).
\end{equation*}
If we suppose that $\nabla u$ is already bounded in $G_T$ some sub-space of $\chi_T $, then one may prove that $\nabla u$ is bounded in  $L^{\infty}_T L^2 \cap L^2_T H^1$. This is the case in dimension two where we get, for free, by the $L^2$-energy estimate \eqref{L2-energy} a uniform bound of $\nabla u $ in $L^2_TL^2 \subset \big((L^4_T L^4) \cdot L^4_T L^4) \big)' \subset \chi_T $.\\
In the case of dimension three, several works have been done in this direction, establishing the global wellposedness of $(NS)$ under assumptions of the type $\nabla u\in G_T$. We can set as an example of these results the well known Prodi-Serrin type criterion, saying that, if $u\in L^p([0,T], L^q(\mathbb{R}^3) )$, with $\frac{2}{p}+\frac{3}{q}=1$ and $q\in ]3,\infty]$, then $(NS)$ is globally wellposed. The limit case where $q=3$ was proved recently by L. Escauriaza, G. Seregin and V. Sver\`ak in \cite{Sverak} proving that: if $T^*\bydef T^*(u_0)$ denotes the life span of a regular solution $u$ associated to the initial data $u_0$ then
\begin{equation}\label{L-3-criterion}
T^* < \infty \implies \limsup_{t\rightarrow T^*} \norm {u(t)}_{L^3(\mathbb{R}^3)} =\infty.
\end{equation} 
This was extended to the full limit in time in $\dot{H}^{\frac{1}{2}}(\mathbb{R}^3)$ by G. Seregin in \cite{Seregin}. And more recently in \cite{Gallagher-Koch-Planchon}, the $L^3$-norm in \eqref{L-3-criterion} was extended to the critical Besov spaces $\dot{B}^{-1+\frac{3}{p}}_{p,q}$, for any $3<p,q<\infty$.\\

On another hand, one may notice that the divergence free condition can provide us another type of conditions for the global regularity (let us say anisotropic ones) under conditions on some components of the velocity or its gradient. Several works have been done in this direction, one may see for instance \cite{Adhikari,Penel1,Penel2,Cao1,Cao2,Planchon1,He1,Ziane1,Panel3,Pokorny1,Pokorny2,Yao1} for examples in some scaling invariant spaces or not of Serrin-type regularity criterion, or equivalently proving that, if $T^*$ is finite then
\begin{align*}
\int_{0}^{T^*} \norm {u^3(t,.)}_{L^q}^pdt=\infty \;\;\; \text{ or } \;\;\; \int_{0}^{T^*} \norm {\partial_j u^3(t,.)}_{L^q}^pdt=\infty.
\end{align*}
The first result in a scaling invariant space under only one component of the velocity has been proved by J.-Y Chemin and P.Zhang in \cite{Chemin2} for  $p\in ]4,6[$ and a little bit later by the same authors together with Z.Zhang in \cite{Chemin4} for $p \in ]4,\infty[$. The case $p=2$ has been treated very recently by J.-Y Chemin, I.Gallagher and P.Zhang in \cite{Isab}.  As mentioned in \cite{Isab} such a result in the case of $p= \infty$, assuming it is true, seems to be out of reach for the time being.\\

 However the authors in \cite{Isab} proved some results for $p=\infty$. Mainly they proved that if there is a blow-up at some time $T^*>0$, then it is not possible for one component of the velocity to tend to 0 too fast. More precisely they proved the following blow-up condition
\begin{equation*}  
\forall \sigma\in \mathbb{S}^2, \forall t< T^*, \; \; \; \sup_{t'\in[t,T^*[}\norm {u(t')\cdot\sigma }_{\dot{H}^{\frac{1}{2}}} \geq c_0 log^{-\frac{1}{2}} \bigg( e+ \frac{\norm {u(t)}_{L^2}^4}{T^*-t} \bigg).
\end{equation*}
The last result proved in their paper needs reinforcing slightly the $\dot{H}^{\frac{1}{2}}$ norm in some directions. Mainly, without loss of generality, their result can be stated as the following
\begin{thm}\label{log_hcase}
There exists a positive constant $c_0$ such that if $u$ is a maximal solution of $(NS)$ in $C([0,T^*[,H^1)$, then for all positive real number $E$ we have:
$$ \;\; T^* < \infty \implies \limsup_{t\rightarrow T^*} \norm {u^3(t)}_{\dot{H}^{\frac{1}{2}}_{log_h,E}}\geq c_0,$$
where
$$\norm a_{\dot{H}^{\frac{1}{2}}_{log_h,E}}^2\overset{def}{=} \int_{\mathbb{R}^3} |\xi| log(E|\xi_h|+e) |\hat{a}(\xi)|^2 d\xi < \infty . $$
\end{thm}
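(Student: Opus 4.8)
The plan is to argue by contradiction. Assume $T^* < \infty$ and suppose that $\limsup_{t\to T^*}\norm{u^3(t)}_{\dot{H}^{\frac{1}{2}}_{\log_h,E}} < c_0$, where $c_0$ is to be fixed below. Then there is a time $t_0 \in [0,T^*)$ with $\sup_{t\in[t_0,T^*)}\norm{u^3(t)}_{\dot{H}^{\frac{1}{2}}_{\log_h,E}} \le c_0$. Treating $u(t_0)$ as a new initial datum, it is enough to show that, under this smallness assumption, the solution stays bounded in $L^\infty([t_0,T^*),H^1)\cap L^2([t_0,T^*),H^2)$: this would let one continue the solution past $T^*$, contradicting the maximality of $T^*$. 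Hence $\limsup_{t\to T^*}\norm{u^3(t)}_{\dot{H}^{\frac{1}{2}}_{\log_h,E}} \ge c_0$ for a suitable $c_0$, which is the claim.

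The starting point is the $\dot{H}^1$ energy identity obtained by testing $(NS)$ against $-\Delta u$,
\begin{equation*}
\tfrac12\,\tfrac{d}{dt}\norm{\nabla u}_{L^2}^2 + \norm{\nabla u}_{\dot{H}^1}^2 = -\int_{\mathbb{R}^3}\big(\nabla u\cdot\nabla u\,\big|\,\nabla u\big),
\end{equation*}
and the whole difficulty is to dominate the trilinear right-hand side by a small multiple of the dissipation $\norm{\nabla u}_{\dot{H}^1}^2$ plus contributions that are integrable in time by the basic energy inequality \eqref{L2-energy}. First I would use the divergence-free condition to replace each vertical derivative $\partial_3 u^3$ by $-\partial_1 u^1-\partial_2 u^2$, reorganising the trilinear term into pieces in which the third component $u^3$ appears either undifferentiated or only through horizontal derivatives. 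This is the algebraic reduction of \cite{Isab}: the part of the nonlinearity governed purely by the divergence-free structure is harmless, while every remaining dangerous term carries at least one factor built out of $u^3$.

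The core of the proof is then an anisotropic product estimate for those dangerous terms. Here I would apply the horizontal Littlewood–Paley decomposition and estimate each dyadic block separately, distinguishing the horizontal from the vertical frequency behaviour. The worst contribution is scaling critical, so no polynomial loss is affordable, only a logarithmic one; and summing the dyadic blocks in the horizontal variable produces precisely a factor $\log(E|\xi_h|+e)$, which is absorbed into the weighted norm $\norm{u^3}_{\dot{H}^{\frac{1}{2}}_{\log_h,E}}$. Schematically one reaches
\begin{equation*}
\Big|\int_{\mathbb{R}^3}\big(\nabla u\cdot\nabla u\,\big|\,\nabla u\big)\Big| \le C\,\norm{u^3}_{\dot{H}^{\frac{1}{2}}_{\log_h,E}}\,\norm{\nabla u}_{\dot{H}^1}^2 + (\text{lower order}),
\end{equation*}
so that choosing $c_0$ with $Cc_0 \le \tfrac12$ lets the critical term be swallowed by the dissipation, and a Grönwall argument on $[t_0,T^*)$ then yields the uniform bound on $\norm{\nabla u}_{L^2}$ and $\norm{\nabla u}_{L^2_t\dot{H}^1}$.

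The main obstacle, and precisely where the modification of \cite{Isab} enters, is the sharp anisotropic estimate of these dangerous trilinear terms: one must track the horizontal versus vertical frequency localisations finely enough that the unavoidable logarithmic loss falls only on the horizontal frequencies $\xi_h$ rather than on the full frequency $\xi$. This is what allows the weight $\log(E|\xi_h|+e)$, stronger than $\dot{H}^{\frac{1}{2}}$ only in the horizontal directions, to close the estimate. The technical heart is therefore a careful Bernstein–Hölder bookkeeping inside the anisotropic Littlewood–Paley calculus, combined with interpolation between $\dot{H}^{\frac{1}{2}}$ and the dissipation norm, to confirm that the summation over horizontal dyadic scales generates exactly this logarithm and nothing worse.
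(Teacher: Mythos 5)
The contraposition framework is right, but the central analytic claim of your sketch --- multiplying $(NS)$ by the full Laplacian $-\Delta u$ and asserting that, after using $\operatorname{div} u=0$, every dangerous piece of the trilinear form $\sum_{i,j,k}\int_{\mathbb{R}^3}\partial_k u^j\,\partial_j u^i\,\partial_k u^i$ carries a factor built out of $u^3$ --- is not correct, and this is exactly the obstruction that dictates the actual structure of the proof. With $-\Delta u$ as multiplier the trilinear form contains, for instance, $\int_{\mathbb{R}^3}\partial_3 u^1\,\partial_1 u^2\,\partial_3 u^2$, which contains no occurrence of $u^3$ and no divergence combination ($\operatorname{div}_h u^h$ or $\partial_3u^3$) from which one could be manufactured; no smallness of any norm of $u^3$ alone can absorb it, so your schematic bound $\big|\int(\nabla u\cdot\nabla u\mid\nabla u)\big|\le C\norm{u^3}_{\dot{H}^{\frac12}_{\log_h,E}}\norm{\nabla u}_{\dot H^1}^2+(\text{lower order})$ fails. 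The proof in the paper (given for the vertical analogue, Theorem~\ref{log_vcase}, and identical in structure to the proof of Theorem~\ref{log_hcase} in \cite{Isab}) is instead a two-tier energy estimate: one first multiplies by $-\Delta_h u$ only, so that the outer derivative is always horizontal; the identity $\partial_3u^3=-\operatorname{div}_hu^h$ then lets one rewrite every term either in the form $\int\partial_i u^3\,\partial_j u^k\,\partial_\ell u^m$ with $(j,\ell)\in\{1,2\}^2$, controlled by $\norm{u^3}_{\dot H^{1/2}}\norm{\nabla_h u}_{\dot H^1}^2$, or as the genuinely hard term $\mathcal{E}_3=-\sum_i\big(\partial_i u^3\,\partial_3u^h\mid\partial_i u^h\big)_{L^2}$. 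Only after $\nabla_h u$ is bounded in $L^\infty_TL^2\cap L^2_T\dot H^1$ does a second, separate estimate with $-\partial_3^2u$ recover $\partial_3u$ and hence the full $\dot H^1$ bound.

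A second, related gap is that you never say where the threshold $E^{-1}$ and the logarithm actually enter, nor what the ``lower order'' remainder is. In Lemma~\ref{hardtermlemma} the hard term is split at horizontal frequency $E^{-1}$ into $J^{\flat}_E+J^{\sharp}_E$. The logarithm arises only in the high-frequency part, from summing $c_k$ over the dyadic range $E^{-1}\lesssim 2^k\lesssim 2^q$ (Lemma~\ref{badfrequencieslemma}), and it is that factor which is absorbed into the weighted norm. The low-frequency part $J^{\flat}_E$ is \emph{not} small: it is bounded by $C\norm{u^3}_{\dot H^{1/2}}^2\norm{\partial_3u^h}_{L^2}^2/E^2$, and is harmless only because $\int_0^{T^*}\norm{\partial_3u^h}_{L^2}^2\,dt$ is finite by the energy inequality \eqref{L2-energy}. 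Without the $\flat/\sharp$ splitting, and without confronting the fact that the third factor in the hard term is $\partial_3u^h$ (a quantity the horizontal dissipation does not control), the estimate cannot be closed. You would need to supply the analogue of Lemma~\ref{hardtermlemma} with the logarithmic weight on $|\xi_h|$, which is the technical heart of the theorem and is entirely missing from the sketch.
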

 Motivated by this result, we aim to show that, up to a small modification in the proof of Theorem \ref{log_hcase}, we can obtain the same blow-up condition in the case $p=\infty$, by slightly reinforcing the $\dot{H}^{\frac{1}{2}}$ norm in the vertical direction instead of the horizontal one. More precisely, we define
\begin{defin}
Let $E$ be a positive real number. We define $\dot{H}^{\frac{1}{2}}_{log_v,E}$ to be the sub space of $\dot{H}^{\frac{1}{2}}(\mathbb{R}^3)$ such that:
$$a\in \dot{H}^{\frac{1}{2}}_{log_{v},E} \iff \norm a_{\dot{H}^{\frac{1}{2}}_{log_v,E}}^2\overset{def}{=} \int_{\mathbb{R}^3} |\xi| log(E|\xi_v|+e) |\hat{a}(\xi)|^2 d\xi < \infty . $$
\end{defin}
 We will prove
\begin{thm}\label{log_vcase}
There exists a positive constant $c_0$ such that if $u$ is a maximal solution of $(NS)$ in $C([0,T^*[,H^1)$, then for all positive real number $E$ we have
$$ \;\; T^* < \infty \implies \limsup_{t\rightarrow T^*} \norm {u^3(t)}_{\dot{H}^{\frac{1}{2}}_{log_v,E}}\geq c_0.$$
\end{thm}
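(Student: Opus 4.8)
The plan is to prove the contrapositive. Assuming $T^* < \infty$, I will show that if $\limsup_{t\rightarrow T^*}\norm{u^3(t)}_{\dot{H}^{\frac12}_{log_v,E}}$ were smaller than a suitable absolute constant $c_0$, then $\norm{u(t)}_{H^1}$ would stay bounded up to $T^*$, so that the local well-posedness theory of Leray recalled in the introduction would extend $u$ beyond $T^*$, contradicting maximality. Concretely I fix $t_0 < T^*$ with $\sup_{t\in[t_0,T^*)}\norm{u^3(t)}_{\dot{H}^{\frac12}_{log_v,E}}\le c_0$ and work on $[t_0,T^*)$. The whole argument follows the scheme used to prove Theorem \ref{log_hcase} in \cite{Isab}; only the way the borderline, scaling-critical frequency summation is organized — and hence the direction in which the logarithm is spent — will change.

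The core is a differential inequality for $\norm{\nabla u(t)}_{L^2}^2$. Testing $(NS)$ against $-\Delta u$ and using $\div u=0$ to cancel the pressure gives
\begin{equation*}
\frac12\frac{d}{dt}\norm{\nabla u}_{L^2}^2 + \norm{\nabla u}_{\dot{H}^1}^2 = -\sum_{i,j,k}\int_{\mathbb{R}^3}(\partial_k u^i)(\partial_i u^j)(\partial_k u^j)\,dx =: I.
\end{equation*}
The task is to bound $I$ by $\tfrac12\norm{\nabla u}_{\dot{H}^1}^2$ plus terms that are integrable in time via Gronwall, the smallness of $c_0$ being exactly what lets the leading contribution be absorbed by the dissipation. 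To reach this I split $I$ according to how many indices equal $3$ and use the divergence-free relation $\partial_3 u^3 = -\partial_1 u^1-\partial_2 u^2$ to replace every occurrence of $\partial_3 u^3$ by horizontal derivatives of the horizontal field $u^h=(u^1,u^2)$; this is what allows all but a controllable remainder to be routed through $u^3$ alone, as in \cite{Isab}.

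The quantitative heart uses the anisotropic Littlewood–Paley decomposition, writing each factor as a sum of blocks $\Delta_j^h\Delta_q^v$ and estimating the resulting trilinear sums by anisotropic Bernstein inequalities, taking $L^\infty$ in the direction that is handled by a Gagliardo–Nirenberg/Agmon inequality and $L^2$ in the others. In the horizontal version of \cite{Isab} the borderline summation falls on the horizontal frequencies $\xi_h$, reflecting the failure of $\dot{H}^1(\mathbb{R}^2)\hookrightarrow L^\infty$, and the weight $log(E|\xi_h|+e)$ is precisely what renders that sum convergent. Here I reorganize the decomposition so that the one-dimensional vertical factor of $u^3$ is placed in $L^\infty_v$ via the $1$-$D$ estimate $\norm{f}_{L^\infty_v}\lesssim\norm{f}_{L^2_v}^{\frac12}\norm{\partial_3 f}_{L^2_v}^{\frac12}$, which turns the borderline summation into one over the vertical frequency $\xi_3$, reflecting the failure of $\dot{H}^{\frac12}(\mathbb{R})\hookrightarrow L^\infty$. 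The weight $log(E|\xi_3|+e)$ built into $\dot{H}^{\frac12}_{log_v,E}$ is then exactly what makes this vertical sum converge, yielding a bound of the shape $|I|\le\tfrac12\norm{\nabla u}_{\dot{H}^1}^2 + C\norm{u^3}_{\dot{H}^{\frac12}_{log_v,E}}^2\norm{\nabla u}_{\dot{H}^1}^2 + (\text{lower order})$, consistent with the critical scaling $u_\lambda(t,x)=\lambda u(\lambda^2 t,\lambda x)$.

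I expect the main obstacle to be precisely this reorganization. The convection operator $u^h\cdot\nabla_h + u^3\partial_3$ together with the constraint $\partial_3 u^3=-\div_h u^h$ treats the one-dimensional vertical direction and the two-dimensional horizontal directions asymmetrically, so moving the logarithm from $\xi_h$ to $\xi_3$ is not a formal symmetry of the problem. I will need to check carefully that the terms in which the vertical frequency of $u^3$ is the dominant one — those previously absorbed by the horizontal logarithm — can still be controlled when only the vertical logarithm is available, keeping exact track of the anisotropic interpolation exponents so that criticality is respected. Once the differential inequality is established, choosing $c_0$ small enough to absorb $\norm{u^3}_{\dot{H}^{\frac12}_{log_v,E}}^2\norm{\nabla u}_{\dot{H}^1}^2$ into the dissipation and applying Gronwall on the finite interval $[t_0,T^*)$ bounds $\norm{\nabla u(t)}_{L^2}$, which closes the contradiction.
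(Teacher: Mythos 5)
Your plan has a genuine structural gap at its very first quantitative step: you test $(NS)$ against the full Laplacian $-\Delta u$ and claim that, after using $\partial_3u^3=-\div_h u^h$, the whole trilinear term $I=-\sum_{i,j,k}\int\partial_k u^i\,\partial_i u^j\,\partial_k u^j$ can be ``routed through $u^3$ alone''. It cannot. The full sum contains, for instance, $\sum_{k,m\in\{1,2\}}\int\partial_3u^k\,\partial_ku^m\,\partial_3u^m$, i.e.\ two vertical derivatives of the horizontal components against a purely horizontal derivative $\partial_ku^m$ with $(k,m)\in\{1,2\}^2$; by the Biot--Savart identity $\partial_i u^j=\mathcal{R}_{i,j}\omega^3+\tilde{\mathcal{R}}_{i,j}\partial_3u^3$ such a factor carries an irreducible $\omega^3$ contribution, which is why the paper's Theorems~\ref{anisotropicbesovthm} and \ref{negativebesovthm} (which do test against $-\Delta u$) must assume control of \emph{both} $\partial_3u^3$ and $\omega^3$. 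Note also that your use of the divergence constraint goes in the wrong direction: to obtain a $u^3$-only criterion one uses $\div_h u^h=-\partial_3u^3$ to \emph{introduce} derivatives of $u^3$, not to eliminate $\partial_3u^3$ in favour of $\nabla_hu^h$. The paper (following \cite{Isab}) instead performs the energy estimate with the \emph{horizontal} Laplacian $-\Delta_h u$, for which every term except $\mathcal{E}_3(u)=-\sum_{i=1}^2(\partial_iu^3\,\partial_3u^h\,|\,\partial_iu^h)_{L^2}$ is bounded by $\norm{u^3}_{\dot H^{1/2}}\norm{\nabla_hu}_{\dot H^1}^2$, and then recovers $\partial_3u$ in a second, separate step by testing against $-\partial_3^2u$ and using the already-obtained bound on $\nabla_hu$. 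Without this two-step anisotropic bootstrap your Gronwall argument has nothing to close on.

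Beyond the choice of multiplier, your proposal never engages with the actual hard term, which is where the vertical logarithm is really spent. In the paper, $\mathcal{E}_3$ is first split at horizontal frequency $E^{-1}$ into $J_E^{\flat}+J_E^{\sharp}$; the high-horizontal-frequency part is further split according to whether horizontal or vertical frequencies of $\partial_3u^\ell$ dominate, and only the ``bad'' part (vertical frequencies dominant) consumes the weight $\log(E|\xi_v|+e)$, via Lemma~\ref{badfrequencieslemma}. Crucially, the low-horizontal-frequency part $J_E^{\flat}$ is \emph{not} absorbable into the dissipation and produces the term $C\norm{u^3}_{\dot H^{1/2}}^2\norm{\partial_3u^h}_{L^2}^2/E^2$, which is integrable in time only because of the Leray energy inequality \eqref{L2-energy}; your unquantified ``(lower order)'' remainder hides exactly this term, and a bound of the shape $|I|\le\frac12\norm{\nabla u}_{\dot H^1}^2+Cc_0^2\norm{\nabla u}_{\dot H^1}^2$ with everything absorbed would leave Gronwall with nothing to do. Your heuristic about moving the borderline summation from $\xi_h$ to $\xi_3$ (failure of $\dot H^{1/2}(\mathbb{R})\hookrightarrow L^\infty$) is in the right spirit, but as written the proof does not reach the point where that heuristic could be implemented.
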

\begin{rmq}
The blow-up condition stated in Theorem \ref{log_vcase} above can be generalized to the following one
$$\forall \sigma\in \mathbb{S}^2, \;\; T^* < \infty \implies \limsup_{t\rightarrow T^*} \norm {\sigma \cdot u(t)}_{\dot{H}^{\frac{1}{2}}_{log_ {\tilde{\sigma}},E}}\geq c_0,$$
where
$$ \norm a_{\dot{H}^{\frac{1}{2}}_{log_{\tilde{\sigma},E}}}^2 \overset{def}{=} \int_{\mathbb{R}^3} |\xi| log(E|\xi_{\tilde{\sigma}}|+e) |\hat{a}(\xi)|^2 d\xi < \infty , \; \; \; \text{with } \xi_{\tilde{\sigma}}\overset{def}{=} (\xi\cdot\sigma)\sigma. $$
\end{rmq}
\begin{rmq}
We can show also that, if there is a blow-up at a finite time $T^*$, then
$$\limsup_{t\rightarrow T^*} \norm {   u^3(t)}_{\dot{H}^{\frac{1}{2}}_{log_E}}\geq c_0,$$
where we set
$$\norm a_{\dot{H}^{\frac{1}{2}}_{log_E}}^2\overset{def}{=} \int_{\mathbb{R}^3} |\xi| log(E\min\{|\xi_h|,|\xi_v|\}+e) |\hat{a}(\xi)|^2 d\xi < \infty . $$
This can be done by following the same ideas in the proof of Theorems \ref{log_hcase} and \ref{log_vcase}, together with the fact that
$$\norm{u^3}_{\dot{H}^{\frac{1}{2}}_{log_E}} ^2 = \norm{u^3_h}_{\dot{H}^{\frac{1}{2}}_{log_{v ,E}}} ^2+\norm{u^3_v}_{\dot{H}^{\frac{1}{2}}_{log _{h,E}}} ^2,$$
with
$$u^3_h \bydef \mathcal{F}^{-1}\big(\mathrm{1}_{|\xi_v|\leq |\xi_h|} \widehat{u^3}(\xi)\big), \quad u^3_v \bydef \mathcal{F}^{-1}\big(\mathrm{1}_{|\xi_h|<|\xi_v|} \widehat{u^3}(\xi)\big) . $$
 We refer the reader to \cite{Haroune-thesis} for more details.
\end{rmq}
 The other two results that we will prove in this paper can be seen as some blow-up criteria under scaling invariant conditions on one component of the velocity and one component of the vorticity, whether in some anisotropic Besov spaces of the form $L^p\big((B_{2,\infty}^{\alpha})_h(B_{2,\infty}^{s_p-\alpha})_v \big)$, for $\alpha\in [0,s_p]$, or $L^p(\mathcal{B}_{q,p})$, where
\begin{align}\label{notation1}
 s_p \overset{def}{=} \frac{2}{p}-\frac{1}{2}, \;\; \text{and} \;\; \mathcal{B}_{q,p} \bydef \dot{B}^{\frac{3}{q}+\frac{2}{p}-2}_{q,\infty}.
\end{align}
We will prove
\begin{thm}\label{anisotropicbesovthm}
Let $u$ be a maximal solution of $(NS)$ in $C([0,T^*[;H^1)$. If $T^*< \infty$, then
\begin{equation*}
\forall p,m \in [2,4],\; \forall \alpha \in \bigg[0, \frac{2}{p}- \frac{1}{2}\bigg], \; \forall \beta \in \bigg[0, \frac{2}{m}- \frac{1}{2}\bigg], \text{ we have:}
\end{equation*}
\begin{equation*}
\int_0^{T^*} \norm {\partial_3u^3(t')}_{\dot{B}^{\alpha, s_p- \alpha}_{2,\infty}}^p dt' +
\int_0^{T^*} \norm {\omega^3(t')}_{\dot{B}^{\beta, s_m- \beta}_{2,\infty}}^m dt' = \infty.
\end{equation*}
\end{thm}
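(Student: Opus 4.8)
The plan is to prove the contrapositive by an energy argument: assuming $T^*<\infty$ together with
\[\int_0^{T^*}\norm{\partial_3u^3}_{\dot B^{\alpha,s_p-\alpha}_{2,\infty}}^p\,dt'<\infty\qquad\text{and}\qquad\int_0^{T^*}\norm{\omega^3}_{\dot B^{\beta,s_m-\beta}_{2,\infty}}^m\,dt'<\infty,\]
I would derive a uniform bound for $\norm{\nabla u(t)}_{L^2}$ on $[0,T^*[$, which, together with the parabolic gain $\nabla u\in L^2_{T^*}H^1$, places $u$ in $L^\infty_{T^*}(H^1)\cap L^2_{T^*}(H^2)$ and thus contradicts the maximality of $T^*$. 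The structurally decisive first step is that the two controlled quantities already control the full horizontal gradient of the horizontal velocity $u_h=(u^1,u^2)$. Indeed $\div u=0$ gives $\div_h u_h=-\partial_3u^3$, while by definition $\curl_h u_h=\omega^3$; since the Fourier map $(\widehat{u^1},\widehat{u^2})\mapsto(\xi_1\widehat{u^1}+\xi_2\widehat{u^2},\ \xi_1\widehat{u^2}-\xi_2\widehat{u^1})$ has determinant $|\xi_h|^2$, each entry of $\nabla_h u_h$ is recovered as $M_1(\partial_3u^3)+M_2(\omega^3)$ with $M_1,M_2$ Fourier multipliers homogeneous of degree $0$ in $\xi_h$. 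Such multipliers are bounded on the anisotropic dyadic blocks, so $\nabla_h u_h$ is controlled by the $L^p_T$-norm of $\partial_3u^3$ and the $L^m_T$-norm of $\omega^3$ in the respective anisotropic Besov spaces.

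Next I would run the $\dot H^1$ energy estimate. Taking the $L^2$ scalar product of $(NS)$ with $-\Delta u$, the pressure drops out because $-\Delta u$ is divergence free, and one integration by parts against the transport term (using $\div u=0$) yields
\[\frac12\frac{d}{dt}\norm{\nabla u}_{L^2}^2+\norm{\Delta u}_{L^2}^2=-\sum_{i,j,k}\int_{\mathbb{R}^3}\partial_k u^i\,\partial_i u^j\,\partial_k u^j\,dx=:-I(t).\]
The key combinatorial remark is that every monomial of $I$ carries at least one controlled factor, the controlled derivatives being the four entries of $\nabla_h u_h$ together with $\partial_3u^3$: if the two factors sharing the differentiation index $k$, namely $\partial_k u^i$ and $\partial_k u^j$, both belong to the uncontrolled set $\{\partial_3u^1,\partial_3u^2,\partial_1u^3,\partial_2u^3\}$, then the middle factor $\partial_i u^j$ is forced to be an entry of $\nabla_h u_h$ or $\partial_3u^3$. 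Hence $I$ splits into finitely many trilinear terms, each containing a controlled factor that I will place in the anisotropic Besov norm.

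The heart of the argument, and the step I expect to be the main obstacle, is the anisotropic trilinear estimate bounding each such term. Using Bony's anisotropic paraproduct decomposition and anisotropic Bernstein inequalities, and distributing the regularity budget $\alpha$ on the horizontal blocks and $s_p-\alpha$ on the vertical ones according to which physical derivatives occur in the two remaining factors, I aim for the scaling-invariant bound
\[|I(t)|\lesssim \norm{a}_{\dot B^{\alpha,s_p-\alpha}_{2,\infty}}\,\norm{\nabla u}_{L^2}^{\frac{2}{p}}\,\norm{\Delta u}_{L^2}^{2-\frac{2}{p}}+\big(\text{analogous term with }m,\beta,\omega^3\big),\]
where $a$ denotes the controlled factor. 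A homogeneity count confirms this is dimensionally consistent, and the hypotheses $p,m\in[2,4]$ are exactly what forces $s_p,s_m\in[0,\tfrac12]$ and the interpolation exponents $2-\tfrac2p,\,2-\tfrac2m\in[1,\tfrac32)$ to be admissible. Young's inequality then absorbs a small multiple of $\norm{\Delta u}_{L^2}^2$ into the left-hand side, leaving
\[\frac12\frac{d}{dt}\norm{\nabla u}_{L^2}^2+\tfrac12\norm{\Delta u}_{L^2}^2\le f(t)\,\norm{\nabla u}_{L^2}^2,\qquad f(t)\lesssim \norm{\partial_3u^3(t)}_{\dot B^{\alpha,s_p-\alpha}_{2,\infty}}^p+\norm{\omega^3(t)}_{\dot B^{\beta,s_m-\beta}_{2,\infty}}^m.\]

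Finally, since $\int_0^{T^*}f(t)\,dt$ is bounded by the two assumed finite integrals, Grönwall's lemma gives $\sup_{t<T^*}\norm{\nabla u(t)}_{L^2}^2\le \norm{\nabla u_0}_{L^2}^2\exp\big(2\int_0^{T^*}f\big)<\infty$, and integrating the dissipation yields $\nabla u\in L^2_{T^*}H^1$. This contradicts $T^*$ being the finite maximal existence time, so at least one of the two integrals must diverge, which is the claim. The only genuinely delicate point is the trilinear bound above: one must track, term by term, where the uncontrolled derivatives $\partial_3u_h$ and $\nabla_h u^3$ appear, so as to pair them with the dissipation $\norm{\Delta u}_{L^2}$ while spending the horizontal/vertical regularity $(\alpha,s_p-\alpha)$ (respectively $(\beta,s_m-\beta)$) on the controlled factor through the anisotropic product law.
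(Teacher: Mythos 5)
Your proposal follows essentially the same route as the paper: the div--curl (Biot--Savart) identity expressing $\nabla_h u_h$ through $\partial_3u^3$ and $\omega^3$ via zero-order horizontal multipliers, the observation that every monomial of the trilinear term $\sum\partial_ku^j\partial_ju^i\partial_ku^i$ contains a controlled factor, and then an absorption/Gr\"onwall argument; your target trilinear bound is exactly the paper's Lemma \ref{lemmafortheorems3} (proved there via the anisotropic product law, the embedding $L^\infty_TL^2\cap L^2_T\dot H^1\hookrightarrow L^q_T(\dot B^{\frac{2}{q}-\theta}_{2,1})_h(\dot B^{\theta}_{2,1})_v$ and duality), and your interpolation exponents $\norm{\nabla u}_{L^2}^{2/p}\norm{\Delta u}_{L^2}^{2-2/p}$ are the correct ones. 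No substantive divergence or gap beyond leaving that lemma as a sketch.
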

\begin{thm}\label{negativebesovthm}
Let $u$ be a maximal solution of $(NS)$ in $C([0,T^*[;H^1)$. If $T^*< \infty$, then for all $q_1,q_2 \in [3,\infty[$, for all $ p_1,p_2$ satisfying 
\begin{equation}\label{assumption1}
\frac{3}{q_i}+ \frac{2}{p_i} \in ]1,2[ ,\;\; i\in \{ 1,2 \},
\end{equation}
we have
\begin{equation*}
\int_0^{T^*} \norm {\partial_3u^3(t')}_{\mathcal{B}_{q_1,p_1}}^{p_1} dt' +
\int_0^{T^*} \norm {\omega^3(t')}_{\mathcal{B}_{q_2,p_2}}^{p_2} dt' = \infty.
\end{equation*}
\end{thm}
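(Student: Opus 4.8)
The plan is to argue by contradiction. Assume $T^*<\infty$ while the sum of the two integrals in the statement is finite, i.e.
$$\int_0^{T^*}\norm{\partial_3u^3(t')}_{\mathcal{B}_{q_1,p_1}}^{p_1}\,dt'+\int_0^{T^*}\norm{\omega^3(t')}_{\mathcal{B}_{q_2,p_2}}^{p_2}\,dt'<\infty.$$
Since $u$ is the maximal solution in $C([0,T^*[;H^1)$ and the Leray energy inequality already controls $\norm{u}_{L^2}$, it suffices to derive a contradiction by proving the a priori bound $\sup_{t<T^*}\norm{\nabla u(t)}_{L^2}<\infty$: such a bound lets one restart the $H^1$--Cauchy problem at a time close to $T^*$ with a uniform lower bound on the existence time, hence continue the solution beyond $T^*$ and contradict maximality. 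The first step is to turn the two controlled scalar quantities into control of a full block of derivatives. Writing $u^h=(u^1,u^2)$ and freezing $x_3$, incompressibility reads $\div_h u^h=-\partial_3u^3$ while, by definition, $\curl_h u^h=\omega^3$; solving this planar div--curl system shows that each entry of $\nabla_h u^h$ is a fixed linear combination of horizontal Riesz transforms of $\partial_3u^3$ and of $\omega^3$. As the Riesz transforms act boundedly on $\dot{B}^s_{q,\infty}$ for $1<q<\infty$, the ``divergence part'' of $\nabla_h u^h$ is controlled in $L^{p_1}_T(\mathcal{B}_{q_1,p_1})$ by $\partial_3u^3$, and its ``curl part'' in $L^{p_2}_T(\mathcal{B}_{q_2,p_2})$ by $\omega^3$.

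Next I would run the $H^1$ energy estimate. Multiplying $(NS)$ by $-\Delta u$, using $\div u=0$ to kill the pressure and to symmetrize the transport term, one gets
$$\tfrac12\tfrac{d}{dt}\norm{\nabla u}_{L^2}^2+\norm{\nabla^2 u}_{L^2}^2=-\sum_{i,j,k}\int_{\mathbb{R}^3}\partial_ku^j\,\partial_ju^i\,\partial_ku^i\,dx=:\mathcal{T}.$$
Call a derivative $\partial_mu^n$ \emph{controlled} when $m$ and $n$ are both horizontal or both equal to $3$ (these are exactly the entries of $\nabla_h u^h$ together with $\partial_3u^3$), and \emph{uncontrolled} when exactly one of $m,n$ equals $3$. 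The crucial observation is that every summand of $\mathcal{T}$ contains at least one controlled factor: if the three factors $\partial_ku^j,\partial_ju^i,\partial_ku^i$ were all uncontrolled, then, setting $\chi(m)=1$ for $m=3$ and $0$ otherwise, one would have $\chi(k)+\chi(j)=\chi(j)+\chi(i)=\chi(k)+\chi(i)=1$, whose sum $2(\chi(i)+\chi(j)+\chi(k))=3$ is impossible by parity. Thus $\mathcal{T}$ splits into terms, each of which carries either an entry of $\nabla_h u^h$ or a $\partial_3u^3$ as one of its three factors.

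The third step is to estimate each such term by placing the controlled factor $g$ (equal to $\partial_3u^3$, or, after the div--curl splitting of $\nabla_h u^h$, to a Riesz transform of $\partial_3u^3$ or of $\omega^3$) in its negative--regularity Besov space and the two remaining first--order factors in positive Sobolev spaces. By duality between $\dot{B}^s_{q,\infty}$ and $\dot{B}^{-s}_{q',1}$ together with the product law $\dot{H}^a\cdot\dot{H}^b\hookrightarrow\dot{B}^{-s}_{q',1}$, one obtains, for $s=\tfrac3q+\tfrac2p-2$ and $a+b=2-\tfrac2p$ (the exponents being forced by scaling invariance, which is precisely why $\tfrac3q+\tfrac2p\in]1,2[$, i.e. $s\in]-1,0[$, is required),
$$\Big|\int_{\mathbb{R}^3} g\,v\,w\,dx\Big|\lesssim\norm{g}_{\dot{B}^s_{q,\infty}}\norm{v}_{\dot{H}^a}\norm{w}_{\dot{H}^b}.$$
Interpolating $\norm{\nabla u}_{\dot{H}^a}\lesssim\norm{\nabla u}_{L^2}^{1-a}\norm{\nabla^2u}_{L^2}^{a}$ on each factor and using Young's inequality with conjugate exponents $(p_i',p_i)$ to absorb $\tfrac12\norm{\nabla^2u}_{L^2}^2$ into the left--hand side (legitimate since $p_i>1$), the energy identity becomes
$$\tfrac{d}{dt}\norm{\nabla u}_{L^2}^2+\norm{\nabla^2u}_{L^2}^2\le C\Big(\norm{\partial_3u^3}_{\mathcal{B}_{q_1,p_1}}^{p_1}+\norm{\omega^3}_{\mathcal{B}_{q_2,p_2}}^{p_2}\Big)\norm{\nabla u}_{L^2}^2.$$
Since the bracket is integrable on $[0,T^*[$ by the contradiction hypothesis, Grönwall's lemma yields $\sup_{t<T^*}\norm{\nabla u(t)}_{L^2}<\infty$, the desired contradiction.

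The main obstacle is the trilinear estimate in the third step: matching the negative index $s\in]-1,0[$ of $\mathcal{B}_{q,p}$ with the two positive Sobolev regularities $a,b$ so that the product law closes with the scaling--dictated exponents, and carrying out the bookkeeping by which each entry of $\nabla_h u^h$ is split into a divergence part (estimated with $(q_1,p_1)$) and a curl part (estimated with $(q_2,p_2)$), so that only the two norms appearing in the statement survive as the Grönwall weight. A secondary technical point is the rigorous justification of the $H^1$ energy identity for a solution known only in $C([0,T^*[;H^1)$, which is handled by mollifying and passing to the limit.
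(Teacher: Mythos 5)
Your proposal is correct and follows essentially the same route as the paper: the $\dot H^1$ energy estimate, the Biot--Savart/div--curl reduction of $\nabla_h u^h$ to Riesz transforms of $\partial_3 u^3$ and $\omega^3$, the observation that every summand of the trilinear term carries a controlled factor, and then the key trilinear bound (your duality--plus--product-law estimate is exactly the content of the paper's Lemma \ref{lemmafortheorems2}, proved there via Bony decomposition, the Sobolev embedding $\dot H^{3/q}\hookrightarrow L^{N}$ and interpolation) followed by Young and Gr\"onwall. The only cosmetic differences are that you make the parity argument and the continuation-past-$T^*$ step explicit where the paper argues by contraposition.
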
  
A bunch of remarks and comments are listed below:
\begin{rmq}
All the spaces stated in Theorem \ref{anisotropicbesovthm} and Theorem \ref{negativebesovthm} above are scaling invariant spaces under the natural 3-D Navier Stokes scaling.
\end{rmq}
\begin{rmq}
The regularity of the spaces stated in the blow-up conditions in Theorem \ref{negativebesovthm} is negative, more precisely under assumption \eqref{assumption1}, $\frac{3}{q_i}+\frac{2}{p_i}-2\in]-1,0[$. Moreover, the integrability asked for in the associated Besov spaces is always higher than $3$, which make these spaces larger than $L^p_T\big( \dot{H}^{\frac{2}{p}-\frac{1}{2}}\big)$.
\end{rmq}
\begin{rmq}
Taking in mind the embedding $L^p_T\big( \dot{H}^{\frac{2}{p}-\frac{1}{2}}\big) \hookrightarrow L^p_T\big(\dot{B}^{\alpha, \frac{2}{p}-\frac{1}{2}- \alpha}_{2,\infty}\big)$, for all $\alpha\in [0,\frac{2}{p}-\frac{1}{2}]$ (see lemma \ref{embedding-anisotropic}), it is obvious then that the blow-up conditions stated in Theorem \ref{anisotropicbesovthm} imply the ones in $L^p_T\big( \dot{H}^{\frac{2}{p}-\frac{1}{2}}\big)$.
\end{rmq}
\begin{rmq}
In the case $p=4$ (resp. $m=4$) in Theorem \ref{anisotropicbesovthm}, $\alpha$ (resp. $\beta $) is necessary zero, this means that the anisotropic space above is nothing but $L^4_T(\dot{B}^0_{2,\infty})$, which is still larger than $L^4_T(L^2)$. The proof in this case can be done without any use of anisotropic techniques.
\end{rmq}
 The structure of the paper is the following: In section 2, we reduce the proof of the Theorems to the proofs of three lemmas. In Section 3, we should present the proofs of these three lemmas, where we will use some results which will be recalled/proved in the Appendix, together with the definition and the properties of the functional spaces used in this work.\\

\noindent \underline{\textit{\textbf{Notations}}}: In the sequel, we will be using the following notations:\\

 If $A$ and $B$ are two real quantities, the notation $A\lesssim B$ means $A\leq CB$ for some universal constant $C$ which is independent on varying parameters of the problem.\\
$(c_q)_{q\in\mathbb{Z}}$ (resp. $(d_q)_{q\in\mathbb{Z}}$) will be a sequence satisfying $\displaystyle\sum_{q\in \mathbb{Z}} c_q^2 \leq 1$ (resp. $\displaystyle\sum_{q\in \mathbb{Z}} d_q \leq 1$), which is allowed to differ from a line to another one.\\
We also set-up the following notations
\begin{center}
$\begin{array}{cc}
L^r_T(L^p_hL^q_v) \bydef L^r((0,T); L^p((\mathbb{R}^2_h); L^q(\mathbb{R}_v))),  & \dot{H}^s_h(\dot{H}^t_v)  \bydef  \dot{H}^{s,t}(\mathbb{R}^3), \\
\norm {\cdot}_{\dot{H}^s_h(\dot{H}^t_v)} \bydef \norm {\cdot}_{\dot{H}^{s,t}(\mathbb{R}^3)},&\norm {\cdot}_{\dot{B}^{s}_{p,q} } \bydef \norm {\cdot}_{\dot{B}^{s}_{p,q} (\mathbb{R}^3)}.
\end{array}$

\end{center}
\section{Proof of the Theorems}
Let $(i,\ell)\in \{1,2 \}^2$, we denote:
\begin{equation*}
J_{i\ell}(u,u^3) \overset{\text{def}}{=} \int_{\mathbb{R}^3} \partial_i u^3 \partial_3 u^{\ell} \partial_i u^{\ell}.
\end{equation*} 
The proof of Theorem \ref{log_vcase} is then based on the following lemma
\begin{lemma}\label{hardtermlemma}
There exists $C>0$ such that, for any $E>0$, we have:
\begin{equation*}
\big| J_{i\ell}(u,u^3)\big| \leq \bigg(\frac{1}{10}+ C \norm {u^3}_{\dot{H}^{\frac{1}{2}}_{log_v,E}} \bigg) \norm {\nabla_h u}_{\dot{H}^1}^2 	+C \norm {u^3}_{\dot{H}^{\frac{1}{2}}} ^2\frac{\norm{\partial_3 u^h}_{L^2}^2}{E^2}.
\end{equation*}
\end{lemma}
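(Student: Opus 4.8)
The plan is to estimate the trilinear term by anisotropic Littlewood--Paley analysis, exploiting that the two factors $\partial_3 u^\ell$ and $\partial_i u^\ell$ only involve the horizontal components $u^\ell$ ($\ell\in\{1,2\}$), while the single ``bad'' factor $\partial_i u^3$ carries the component $u^3$ that we wish to measure in $\dot H^{\frac12}_{log_v,E}$. Since the logarithmic weight sits on the vertical frequency, the vertical direction is the one to be resolved by hand. First I would apply an anisotropic H\"older inequality in the splitting $\mathbb{R}^3=\mathbb{R}^2_h\times\mathbb{R}_v$, placing $\partial_i u^3$ in $L^2_h(L^\infty_v)$ and each of $\partial_3 u^\ell,\partial_i u^\ell$ in $L^4_h(L^2_v)$, so that $|J_{i\ell}|\le \norm{\partial_i u^3}_{L^2_h(L^\infty_v)}\norm{\partial_3 u^\ell}_{L^4_h(L^2_v)}\norm{\partial_i u^\ell}_{L^4_h(L^2_v)}$ (the exponents satisfying $\tfrac12+\tfrac14+\tfrac14=1$).

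Two tools then drive the estimate. First, the horizontal factors are controlled by the two-dimensional Sobolev embedding $\dot H^{\frac12}_h\hookrightarrow L^4_h$ together with a redistribution of horizontal and vertical derivatives among the dyadic blocks, so that after summation the pair $\norm{\partial_3 u^\ell}_{L^4_h(L^2_v)}\norm{\partial_i u^\ell}_{L^4_h(L^2_v)}$ is absorbed into $\norm{\nabla_h u}_{\dot H^1}^2$; the derivative that is ``missing'' on these two factors is borrowed from $\partial_i u^3$ through the convolution constraint $\xi+\eta+\zeta=0$, which forces the vertical frequency of $u^3$ to be dominated by those of the $u^\ell$'s. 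Second, and this is the crucial point, $\norm{\partial_i u^3}_{L^2_h(L^\infty_v)}$ is handled by the one-dimensional vertical embedding; since $\dot H^{\frac12}(\mathbb{R})$ just fails to embed into $L^\infty(\mathbb{R})$, I would pass through the vertical dyadic decomposition and Bernstein, $\norm{\Delta^v_m\partial_i u^3}_{L^\infty_v}\lesssim 2^{m/2}\norm{\Delta^v_m\partial_i u^3}_{L^2_v}$, and apply Cauchy--Schwarz against the weight $\log(E2^m+e)$. The range of admissible scales $m$ is cut off at the vertical frequency of the companion factors by the support constraint above, and it is exactly this truncation that makes the weighted sum converge and produces the factor $\norm{u^3}_{\dot H^{\frac12}_{log_v,E}}$ rather than a higher-order norm of $u^3$.

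To separate the two terms on the right-hand side I would split the interaction according to whether the relevant vertical frequencies lie above or below the scale $1/E$ fixed by the weight. On the regime $|\xi_v|\gtrsim 1/E$ the weight $\log(E|\xi_v|+e)$ is $\gtrsim 1$, and the mechanism of the previous paragraph yields the contribution $C\norm{u^3}_{\dot H^{\frac12}_{log_v,E}}\norm{\nabla_h u}_{\dot H^1}^2$. On the complementary regime $|\xi_v|\lesssim 1/E$ the weight is comparable to $1$, so one only recovers the plain $\dot H^{\frac12}$ norm of $u^3$; here the vertical frequency localization supplies, through Bernstein in the vertical variable, a gain of $1/E$ and lets me measure $\partial_3 u^\ell$ by its full norm $\norm{\partial_3 u^h}_{L^2}$, giving a bound of the shape $\tfrac{C}{E}\norm{u^3}_{\dot H^{\frac12}}\norm{\partial_3 u^h}_{L^2}\norm{\nabla_h u}_{\dot H^1}$. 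A final Young inequality distributes this into $\tfrac1{10}\norm{\nabla_h u}_{\dot H^1}^2$ and $\tfrac{C}{E^2}\norm{u^3}_{\dot H^{\frac12}}^2\norm{\partial_3 u^h}_{L^2}^2$, which are exactly the two remaining terms of the claim.

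The main obstacle will be the second tool: organizing the anisotropic Littlewood--Paley sums so that the derivatives land precisely where the statement requires --- a full gradient together with one horizontal derivative on each $u^\ell$ (assembling $\norm{\nabla_h u}_{\dot H^1}^2$), and only half a vertical derivative, logarithmically weighted, on $u^3$. Getting this bookkeeping right requires the frequency-support argument to transfer the surplus derivative from $\partial_i u^3$ to the horizontal factors and, simultaneously, to truncate the vertical sum so that the weighted Cauchy--Schwarz converges; keeping these two uses of the convolution constraint consistent, and checking that the resulting dyadic constants sum to the stated bound uniformly in $E$, is the delicate part. The remaining ingredients --- the anisotropic H\"older inequality, the $2$-D and $1$-D Sobolev inequalities, and Young's inequality --- are routine and can be quoted from the appendix.
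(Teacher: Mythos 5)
Your toolbox is the right one (anisotropic Littlewood--Paley, Bernstein, a frequency cut at scale $E^{-1}$ generating the two terms, duality and product laws), and the paper's proof does live in this world: it splits $J_{i\ell}=J_E^{\flat}+J_E^{\sharp}$ according to the \emph{horizontal} frequencies of $\partial_3 u^{\ell}$ relative to $E^{-1}$, runs a horizontal Bony decomposition on $J_E^{\sharp}$, isolates the ``bad'' blocks where the vertical frequency $2^q$ dominates the horizontal one $2^k$, and there exploits that the horizontal scales of $\partial_3 u^{\ell}_{\sharp,E^{-1}}$ are confined to $E^{-1}\lesssim 2^k\lesssim 2^q$, so that Cauchy--Schwarz over these $\approx\log(E2^q)$ scales produces the factor $\big(\log(E2^q+e)\big)^{1/2}$ which is then attached to the block $\Delta_k^h\Delta_q^v u^3$ (Lemma \ref{badfrequencieslemma}). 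Measured against this, your proposal has three concrete gaps. First, the opening global anisotropic H\"older step forfeits the game: $\norm{\partial_3 u^{\ell}}_{L^4_h(L^2_v)}\lesssim\norm{\partial_3 u^{\ell}}_{(\dot H^{1/2})_h(L^2)_v}$ is \emph{not} controlled by $\norm{\nabla_h u}_{\dot H^1}$ --- it is missing half a horizontal derivative exactly on the low horizontal frequencies of $u^{\ell}$, which is the entire difficulty of $\mathcal{E}_3$ --- and $\norm{\partial_i u^3}_{L^2_h(L^\infty_v)}$ scales like $\norm{u^3}_{\dot H^{3/2}}$, a full derivative above the target. The ``borrowing through the convolution constraint'' that is meant to repair both defects cannot be performed once H\"older has been applied globally, and even block by block the constraint only ties the two largest frequencies together, so it supplies nothing to a $\partial_3 u^{\ell}$ factor whose horizontal frequency is the smallest of the three.

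Second, your cut at scale $1/E$ is taken on the wrong variable. The term $E^{-2}\norm{u^3}_{\dot H^{1/2}}^2\norm{\partial_3 u^h}_{L^2}^2$ comes from the low \emph{horizontal} frequencies of $\partial_3 u^{\ell}$: one pays there the extra horizontal derivative demanded by the 2D product law, and $\sum_{2^k\lesssim E^{-1}}2^k\norm{\Delta_k^h\partial_3 u^{\ell}}_{L^2}\lesssim E^{-1}\norm{\partial_3 u}_{L^2}$. A vertical Bernstein at vertical scales $\le 1/E$ yields at best a gain of $E^{-1/2}$ from the $L^\infty_v$ embedding, and extracting a full $E^{-1}$ from the single vertical derivative of $\partial_3 u^{\ell}$ would leave $\norm{u^{\ell}}_{L^2}$, not the $\norm{\partial_3 u^h}_{L^2}$ your target requires (and the dimensions then no longer balance). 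Third, the weighted Cauchy--Schwarz you invoke to generate $\norm{u^3}_{\dot H^{1/2}_{log_v,E}}$ does not converge: the dual sum $\sum_m \big(\log(E2^m+e)\big)^{-1}$ has terms tending to $1$ as $m\to-\infty$, and even after a two-sided truncation $E^{-1}\lesssim 2^m\lesssim 2^M$ it still diverges like the logarithm of the number of scales. In the paper the logarithmic weight is not produced by such a dual-weight argument at all; it is a \emph{count of horizontal dyadic scales} of $\partial_3 u^{\ell}_{\sharp,E^{-1}}$, which is precisely why it can be absorbed as a weight on the vertical frequency of $u^3$ without any divergent companion sum. As it stands, the proposal would need to be rebuilt around the horizontal $\flat/\sharp$ splitting to close.
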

 While, the proofs of Theorem \ref{anisotropicbesovthm} and Theorem \ref{negativebesovthm} are essentially based on the following ones
\begin{lemma}\label{lemmafortheorems3}
For all $p\in ]2,4[$, for all $\alpha\in ]0,s_p[$, where $s_p=\frac{2}{p}-\frac{1}{2}$, we have:
\begin{equation*}
\big|  \big(  fg | g \big)_{L^2} \big| \leq \frac{1}{10} \norm {g}_{\dot{H}^1(\mathbb{R}^3)}^2+ C \norm {f}_{ \dot{B}^{\alpha,s_p- \alpha}_{2,\infty}}^p \norm {g}_{L^2(\mathbb{R}^3)}^2.
\end{equation*}
\end{lemma}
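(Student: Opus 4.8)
The plan is, first, to reduce the asserted inequality to the scale–invariant product estimate
\begin{equation*}
\big|(fg|g)_{L^2}\big| \le C_0\, \norm{f}_{\dot B^{\alpha,s_p-\alpha}_{2,\infty}}\,\norm{g}_{L^2}^{\frac{2}{p}}\,\norm{g}_{\dot H^1}^{2-\frac{2}{p}},
\end{equation*}
and then to recover the statement by Young's inequality. Writing the right–hand side as the product of $X=\norm{g}_{\dot H^1}^{2-\frac{2}{p}}$ and $Y=C_0\norm{f}_{\dot B^{\alpha,s_p-\alpha}_{2,\infty}}\norm{g}_{L^2}^{\frac{2}{p}}$ and applying Young with the conjugate exponents $p'=\frac{p}{p-1}$ and $p$ gives $XY\le \delta\,X^{p'}+C_\delta\,Y^{p}$; since $(2-\frac{2}{p})p'=2$ and $\frac{2}{p}\cdot p=2$, the right–hand side equals $\delta\,\norm{g}_{\dot H^1}^2+C_\delta C_0^{p}\,\norm{f}_{\dot B^{\alpha,s_p-\alpha}_{2,\infty}}^{p}\,\norm{g}_{L^2}^2$, and choosing $\delta=\tfrac{1}{10}$ yields the claim. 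A scaling check confirms the shape of the product estimate: under the dilation $h\mapsto h(\lambda\,\cdot)$ applied simultaneously to $f$ and $g$, both sides are homogeneous of degree $-3$ exactly when $s_p=\frac{2}{p}-\frac{1}{2}$, which both dictates the exponents $\frac{2}{p}$ and $2-\frac{2}{p}$ and indicates why the analysis lives in the open regime (so that the vertical index stays strictly below the one–dimensional endpoint $\frac12$, i.e. $p>2$, and strictly positive, i.e. $p<4$).

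For the product estimate itself I would use the anisotropic Littlewood–Paley decomposition $f=\sum_{j,k}\Delta^h_j\Delta^v_k f$ into blocks localized at horizontal frequency $2^{j}$ and vertical frequency $2^{k}$. The definition of the anisotropic Besov norm gives $\norm{\Delta^h_j\Delta^v_k f}_{L^2}\le c_{j,k}\,2^{-j\alpha}2^{-k(s_p-\alpha)}\norm{f}_{\dot B^{\alpha,s_p-\alpha}_{2,\infty}}$ with $|c_{j,k}|\le1$. For each block I estimate $\int (\Delta^h_j\Delta^v_k f)\,g^2$ by an anisotropic Hölder inequality, using anisotropic Bernstein to trade the Lebesgue integrability of the block for its $L^2$ norm (producing weights $2^{j}$, $2^{k}$), and anisotropic Gagliardo–Nirenberg inequalities on the two copies of $g$—of the type $\norm{g}_{L^\infty_v L^2_h}\lesssim\norm{g}_{L^2}^{1/2}\norm{\partial_3 g}_{L^2}^{1/2}$ and $\norm{g}_{L^2_v L^4_h}\lesssim\norm{g}_{L^2}^{1/2}\norm{\nabla_h g}_{L^2}^{1/2}$—to manufacture the factors $\norm{g}_{L^2}$ and $\norm{\nabla g}_{L^2}$ carrying matching dyadic weights.

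The delicate point, and the step I expect to be the main obstacle, is the summation over $(j,k)\in\mathbb Z^2$. Because the regularity index of $f$ is the \emph{weak} one ($q=\infty$), $f$ sits exactly at the scaling threshold and the block contributions are not absolutely summable on their own; the needed convergence must be bought from the high–frequency decay carried by the $\dot H^1$ factor of $g$. Concretely, in each block one compares the frequency of $f$ with those of the two copies of $g$ (an anisotropic paraproduct splitting into low–high, high–low and high–high interactions), after which the geometric series in $j$ and in $k$ converge precisely because $0<\alpha$ and $0<s_p-\alpha$ are \emph{strict} (whence the open intervals $\alpha\in\,]0,s_p[$ and $p\in\,]2,4[$). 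A final Cauchy–Schwarz/Young step in the frequency indices then reassembles the sum into $\norm{f}_{\dot B^{\alpha,s_p-\alpha}_{2,\infty}}\norm{g}_{L^2}^{2/p}\norm{g}_{\dot H^1}^{2-2/p}$, establishing the product estimate and hence the lemma.
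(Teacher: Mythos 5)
Your overall architecture is exactly the paper's: both reduce the lemma to the trilinear estimate
\begin{equation*}
\big|(fg|g)_{L^2}\big| \lesssim \norm{f}_{\dot B^{\alpha,s_p-\alpha}_{2,\infty}}\,\norm{g}_{L^2}^{\frac{2}{p}}\,\norm{g}_{\dot H^1}^{2-\frac{2}{p}},
\end{equation*}
and your Young's inequality step (with $(2-\tfrac{2}{p})p'=2$ and $\tfrac{2}{p}\cdot p=2$) is precisely how the paper concludes. The divergence is in how the trilinear estimate is obtained. The paper does it by duality: pairing $f$ against $g\cdot g$ costs the norm of $g\cdot g$ in $(\dot B^{-\alpha}_{2,1})_h(\dot B^{-(s_p-\alpha)}_{2,1})_v$ (third index $1$, to match the $\ell^\infty$ summation of $f$), and this norm is controlled by the Chemin--Zhang anisotropic product law (Lemma \ref{produclaw}) applied to $g\in(\dot B^{\frac{2}{q}-\theta}_{2,1})_h(\dot B^{\theta}_{2,1})_v$ with the specific choices $\frac{2}{q}=1-\frac{1}{p}$ and $\theta=\frac12+\frac{\alpha}{2}-\frac{1}{p}$, combined with the embedding $\dot B^{\frac{2}{q}}_{2,1}\hookrightarrow(\dot B^{\frac{2}{q}-\theta}_{2,1})_h(\dot B^{\theta}_{2,1})_v$ (Lemma \ref{embedding-anisotropic}) and the interpolation $\norm{g}_{\dot B^{2/q}_{2,1}}\lesssim\norm{g}_{\dot H^1}^{2/q}\norm{g}_{L^2}^{1-2/q}$ (Lemma \ref{interpolation-lemma}). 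Your route --- decomposing $f$ into blocks $\Delta^h_j\Delta^v_k f$ and running anisotropic H\"older/Bernstein/Gagliardo--Nirenberg by hand --- amounts to re-proving that product law from scratch.

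The genuine gap is that you stop exactly where the work is. Because the third index of the $f$-norm is $\infty$, the coefficients $c_{j,k}$ are only bounded, so the entire burden of the proof is to show that the $g$-dependent block quantities form an $\ell^1_{j,k}$ sequence with total mass $\lesssim\norm{g}_{L^2}^{2/p}\norm{g}_{\dot H^1}^{2-2/p}$; equivalently, that $g\cdot g$ lands in the negative-regularity anisotropic Besov space with summation index $1$. You correctly identify this as ``the delicate point,'' but then assert it (``the needed convergence must be bought from\ldots,'' ``a final Cauchy--Schwarz/Young step then reassembles the sum'') rather than prove it: you never exhibit how the weights $2^{-j\alpha}2^{-k(s_p-\alpha)}$ are absorbed by the low--high/high--low/high--high interactions in \emph{both} the horizontal and vertical paraproducts simultaneously, nor why the result is summable rather than merely bounded in $(j,k)$. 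Moreover, the Gagliardo--Nirenberg inequalities you list produce $\norm{\partial_3 g}_{L^2}$ and $\norm{\nabla_h g}_{L^2}$ with fixed exponents $\tfrac12$, whereas the target exponents $\tfrac{2}{p}$ and $2-\tfrac{2}{p}$ vary with $p$; the correct mechanism (as in the paper) is to interpolate $g$ into $\dot B^{2/q}_{2,1}$ first and then split that regularity anisotropically as $\frac{2}{q}=(\frac{2}{q}-\theta)+\theta$, with $\theta$ tuned to $\alpha$ so that the product law outputs exactly the dual indices $(-\alpha,-(s_p-\alpha))$. Without that index bookkeeping, or an equivalent explicit summation argument, the proposal is a plausible plan but not a proof.
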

\begin{lemma}\label{lemmafortheorems2}
For any $p,q\in [1,\infty]$ satisfying $\frac{3}{q}+ \frac{2}{p} \in ]1,2[$ we have
\begin{equation*}
\big| \big(  fg | g \big)_{L^2} \big| \leq \frac{1}{10} \norm {g}_{\dot{H}^1(\mathbb{R}^3)}^2+ C \norm {f}_{\mathcal{B}_{q,p}}^p \norm {g}_{L^2(\mathbb{R}^3)}^2.
\end{equation*}
\end{lemma}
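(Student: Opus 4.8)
The plan is to reduce Lemma \ref{lemmafortheorems2} to a single scaling-invariant bilinear estimate, and then to conclude by interpolation and Young's inequality. Write $s \bydef \frac{3}{q}+\frac{2}{p}-2$, so that $\mathcal{B}_{q,p}=\dot{B}^{s}_{q,\infty}$ and, by the hypothesis $\frac{3}{q}+\frac{2}{p}\in\,]1,2[$, we have $s\in\,]-1,0[$; let $q'$ denote the conjugate exponent of $q$. First I would use the Besov duality $\big|(fg|g)_{L^2}\big|=|\langle f,g^2\rangle|\le \norm{f}_{\dot{B}^{s}_{q,\infty}}\,\norm{g^2}_{\dot{B}^{-s}_{q',1}}$, which follows at the Littlewood--Paley level from Hölder in $x$ and in the dyadic index. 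The entire difficulty is then concentrated in the product estimate
\begin{equation*}
\norm{g^2}_{\dot{B}^{-s}_{q',1}}\lesssim \norm{g}_{\dot{H}^{\sigma_1}}\,\norm{g}_{\dot{H}^{\sigma_2}},\qquad \sigma_1+\sigma_2=2-\tfrac{2}{p},\quad \sigma_1,\sigma_2\in[0,1],
\end{equation*}
for a suitable choice of the splitting exponents $\sigma_1,\sigma_2$, the constraint $\sigma_1+\sigma_2=2-\frac{2}{p}$ being forced by homogeneity.

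Granting this bilinear bound, the conclusion is routine. Since $\sigma_i\in[0,1]$, interpolation between $L^2$ and $\dot{H}^1$ gives $\norm{g}_{\dot{H}^{\sigma_i}}\le\norm{g}_{L^2}^{1-\sigma_i}\norm{g}_{\dot{H}^1}^{\sigma_i}$; multiplying and using $\sigma_1+\sigma_2=2-\frac{2}{p}$ yields $\norm{g^2}_{\dot{B}^{-s}_{q',1}}\lesssim \norm{g}_{L^2}^{2/p}\norm{g}_{\dot{H}^1}^{2-2/p}$. Combined with the duality bound this reads $\big|(fg|g)_{L^2}\big|\lesssim \norm{f}_{\dot{B}^{s}_{q,\infty}}\norm{g}_{L^2}^{2/p}\norm{g}_{\dot{H}^1}^{2-2/p}$; writing the last factor as $\big(\norm{g}_{\dot{H}^1}^2\big)^{1-1/p}$ and applying Young's inequality with the conjugate exponents $p$ and $\frac{p}{p-1}$ produces exactly $\frac{1}{10}\norm{g}_{\dot{H}^1}^2+C\norm{f}_{\dot{B}^{s}_{q,\infty}}^p\norm{g}_{L^2}^2$, which is the claim.

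For the product estimate I would use Bony's decomposition $g^2=2T_g g+R(g,g)$ and treat the low--high paraproduct and the resonant remainder separately. For a dyadic block at frequency $2^n$ I estimate $\norm{\Delta_n(\cdot)}_{L^{q'}}$ by Hölder in $x$ with two exponents $a,b$ satisfying $\frac1a+\frac1b=\frac1{q'}$, controlling the high-frequency factor by Bernstein and the low-frequency factor $S_{n-1}g$ by summing Bernstein over $k\le n$, and I distribute the regularities so that the low block $\Delta_k g$ carries $\dot{H}^{\sigma_1}$ while the block at $2^n$ carries $\dot{H}^{\sigma_2}$. A short computation shows that both the $n$-sum defining the $\dot{B}^{-s}_{q',1}$ norm and the inner $k$-sum have vanishing net exponent once $\sigma_1+\sigma_2=2-\frac2p$, after which Cauchy--Schwarz in $n$ closes the estimate by $\norm{g}_{\dot{H}^{\sigma_1}}\norm{g}_{\dot{H}^{\sigma_2}}$.

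The main obstacle is precisely the bookkeeping of these exponents, and it is where the hypothesis $\frac{3}{q}+\frac{2}{p}\in\,]1,2[$ gets consumed. The resonant term converges because $s<0$ (the upper bound $\frac3q+\frac2p<2$), which makes the geometric sum over $n\le j$ summable. The low--high term instead requires the inner low-frequency sum to converge, i.e. $3(\tfrac12-\tfrac1a)>\sigma_1$ (forcing $\sigma_1$ small, essentially $\sigma_1<\frac3q$), together with $\sigma_2=2-\frac2p-\sigma_1\le1$ (forcing $\sigma_1\ge1-\frac2p$); hence an admissible splitting exponent $\sigma_1$ exists exactly when $1-\frac2p<\frac3q$, that is when $\frac3q+\frac2p>1$. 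Verifying that this window for $\sigma_1$ is nonempty under the stated assumption, and treating the borderline integrabilities (for instance $q=\infty$, where one replaces the Hölder step by a direct dyadic interpolation of the low--high sum between $L^2$ and $\dot{H}^1$), is the only genuinely delicate point.
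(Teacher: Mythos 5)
Your proposal is correct and follows essentially the same route as the paper: Besov duality pairing $f\in\dot{B}^{s}_{q,\infty}$ against $g^2\in\dot{B}^{-s}_{q',1}$, Bony's decomposition estimated by H\"older/Bernstein, interpolation of $g$ between $L^2$ and $\dot{H}^1$ with total regularity $2-\frac{2}{p}$, and Young's inequality at the end. The only (minor) difference is that the paper fixes $\sigma_1=\frac{3}{q}$ and bounds $\norm{S_{j-1}g}_{L^{N}}\le\norm{g}_{L^{N}}\lesssim\norm{g}_{\dot{H}^{3/q}}$ by the global Sobolev embedding, which sidesteps the strict inequality $\sigma_1<\frac{3}{q}$ that your summed-Bernstein bound requires and at the same time covers the $q=\infty$ borderline you flag.
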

 As mentioned above, let us assume that lemmas \ref{hardtermlemma}, \ref{lemmafortheorems3} and  \ref{lemmafortheorems2} hold true, which we will prove in the next section, and let us prove Theorems \ref{log_vcase}, \ref{anisotropicbesovthm} and \ref{negativebesovthm}.
\subsection{\textit{Proof of Theorem \ref{log_vcase}}}
\begin{proof}[]
 Following the idea of \cite{Isab} we begin by establishing a bound of $\nabla_h u$ in $L^{\infty}_T(L^2) \cap L^2_T(\dot{H}^1)$, then we use this estimate to prove a bound of $\partial_3 u$ in $L^{\infty}_T(L^2) \cap L^2_T(\dot{H}^1)$. To do so we multiply $(NS)$ by $-\Delta_h u$, usual calculation leads then to:
\begin{equation}\label{innerproduct}
\frac{1}{2}\frac{d}{dt} \norm {\nabla_h u }_{L^2}^2 + \norm {\nabla_h u}_{\dot{H}^1}^2 = \sum_{i=1}^4\mathcal{E}_i(u) \text{    with: }
\end{equation} 
\begin{align*}
\mathcal{E}_1(u) &\overset{\text{def}}{=} -\sum_{i=1}^2  \big(\partial_i u^h \cdot \nabla_h u^h \big| \partial_i u^h \big)_{L^2}, \\
\mathcal{E}_2(u) &\overset{\text{def}}{=} -\sum_{i=1}^2\big(\partial_i u^h \cdot \nabla_h u^3 \big| \partial_i u^3 \big)_{L^2},\\
\mathcal{E}_3(u) &\overset{\text{def}}{=} -\sum_{i=1}^2 \big(\partial_i u^3 \partial_3 u^h \big| \partial_i u^h \big)_{L^2},\\
\mathcal{E}_4(u) &\overset{\text{def}}{=} -\sum_{i=1}^2  \big(\partial_i u^3 \partial_3 u^h \big| \partial_i u^h \big)_{L^2}.
\end{align*}
A direct computation shows that $\mathcal{E}_1(u),\mathcal{E}_2(u)$ and $\mathcal{E}_4(u)$ can be expressed as a sum of terms of the form
\begin{equation*}
I(u)\overset{\text{def}}{=} \int_{\mathbb{R}^3} \partial_i u^3 \partial_j u^k \partial_{\ell} u^m,
\end{equation*}
where: $(j,\ell)\in \{1,2 \}^2$ and $(i,k,m)\in\{ 1,2,3\}^3$.\\
Next, by duality, product rules and then interpolation, for any $p\in ]1,+\infty]$, one may easily show that\footnote{Notice that $I(u)$ provides a global bound if $u^3 \in L^p(\dot{H}^{\frac{2}{p}+\frac{1}{2}})$ for some $p\in ]1,\infty]$. It is in fact the term $\mathcal{E}_3(u)$ which poses a problem, and this is why this method doesn't give a complete answer to the regularity criteria under one component only in the case $p=2$ as mentioned in \cite{Isab}.}
\begin{align*}
I(u)&\lesssim \norm {\nabla_h u^3}_{\dot{H}^{\frac{2}{p}-\frac{1}{2}}} \norm {\partial_j u^k \partial_{\ell} u^m}_{\dot{H}^{\frac{1}{2}-\frac{2}{p}}}\\
&\lesssim \norm {u^3}_{\dot{H}^{\frac{2}{p}+\frac{1}{2}}} \norm {\nabla_hu}_{\dot{H}^{1-\frac{1}{p}}}^2\\
&\lesssim \norm {u^3}_{\dot{H}^{\frac{2}{p}+\frac{1}{2}}} \norm {\nabla_hu}_{L^2}^{\frac{2}{p}}\norm {\nabla_hu}_{\dot{H}^1}^{2-\frac{2}{p}}.
\end{align*}
In particular for $p=\infty$ we have:
\begin{equation}\label{I(u)estimate}
I(u)\lesssim \norm {u^3}_{H^{\frac{1}{2}}} \norm {\nabla_hu}_{H^1}^{2}.
\end{equation}
The term $\mathcal{E}_3(u)$, can be estimated by using lemma \ref{hardtermlemma}, to obtain
\begin{equation*}
 \mathcal{E}_3 (u)\leq \bigg(\frac{1}{10}+ C \norm {u^3}_{H^{\frac{1}{2}}_{log_v,E}} \bigg) \norm {\nabla_h u}_{\dot{H}^1}^2 	+C \norm {u^3}_{\dot{H}^{\frac{1}{2}}} ^2\frac{\norm{\partial_3 u^h}_{L^2}^2}{E^2}.
\end{equation*}
We define then
\begin{equation*}
T_*\overset{\text{def}}{=}\sup \bigg\{ T\in [0,T^*[ / \sup_{t\in[0,T]}\norm {u^3(t)}_{H^{\frac{1}{2}}_{log_v,E}} \leq \frac{1}{4C}  \bigg\}.
\end{equation*}
Therefore, for all $t\leq T_*$, relation \eqref{innerproduct} together with estimate \eqref{I(u)estimate}, lemma \ref{hardtermlemma} and the classical $L^2-$energy estimate lead to
\begin{equation}\label{uniformbound}
 \norm {\nabla_h u(t) }_{L^2}^2 +\int_0^t \norm {\nabla_h u(s)}_{\dot{H}^1}^2ds \leq \norm {\nabla_h u_0}_{L^2}^2 + \frac{\norm {u_0}_{L^2}^2}{E^2}.
\end{equation}
On the other hand, as explained in \cite{Isab}, multiplying $(NS)$ by $-\partial_3^2 u$, integrating over $\mathbb{R}^3$, integration by parts together with the divergence free condition lead to
\begin{align*}
\frac{1}{2}\frac{d}{dt} \norm {\partial_3 u }_{L^2}^2 + \norm {\partial_3 u}_{\dot{H}^1}^2 &\lesssim \norm {\partial_3 u}_{L^6}\norm {\nabla_h u}_{L^3}\norm {\partial_3 u}_{L^2} \\
&\lesssim \frac{1}{2} \norm {\partial_3 u}_{\dot{H}^1}^2 + C \norm {\nabla_h u}_{L^2}\norm {\nabla_h u}_{\dot{H}^1} \norm {\partial_3 u }_{L^2}^2.
\end{align*}
\eqref{uniformbound} above leads then to a bound for $u$ in $L^{\infty}_{T_*}(\dot{H}^1)$.\\
Thus, by contraposition, if the quantity $\norm {u(t)}_{\dot{H}^1}$ blows up at a finite time $T^*>0$, then
\begin{equation*}
\forall t\in  [0,T^*[: \sup_{s\in[0,t]} \norm {u^3(s)}_{\dot{H}^{\frac{1}{2}}} >c_0 \overset{\text{def}}{=} \frac{1}{4C},
\end{equation*} 
which gives the desired result by passing to the limit $t\rightarrow T^*$.\\
Theorem \ref{log_vcase} is proved.
\end{proof}
\subsection{\textit{Proof of Theorem \ref{anisotropicbesovthm}}} 
\begin{proof}[]
Following for example an idea from \cite{Zujin1}, we multiply $(NS)$ by $-\Delta u$ and we integrate in space to obtain
\begin{align*}
\frac{1}{2}\frac{d}{dt} \norm {\nabla u}_{L^2}^2 + \norm {\Delta u }_{L^2}^2 &= \int_{\mathbb{R}^3} \big(u \cdot \nabla u \big) \cdot \Delta u  \\
& = - \int_{\mathbb{R}^3} \sum_{i,j,k=1}^3 \partial_k u^j\partial_j u^i \partial_k u^i.
\end{align*}  
For the time being, we don't know how to deal with the tri-linear term on the right hand side above in order to obtain a global-estimate of $u$ in $ L^{\infty}_T \dot{H}^1_x \cap L^2_T \dot{H}^2_x$. So to close the estimates the idea is similar to the one in Theorems \ref{log_hcase} and \ref{log_vcase}, and it consists in looking at this term as a bi-linear operator acting on $ \big(L^{\infty}_T \dot{H}^1_x \cap L^2_T \dot{H}^2_x\big)^2$ after assuming a condition which allows to control some components of the matrix $\partial_i u^j$.\\
Let us recall the Biot-Savart law identity which allows to write the so-called div-curl decomposition of $u^h$ as
\begin{equation}\label{u^h}
u^h=\nabla^{\perp}_h\Delta_h^{-1}(\omega^3)-\nabla_h \Delta^{-1}_h(\partial_3 u^3).
\end{equation}
Identity \eqref{u^h} insures that, for $(i,j)\in\{1,2\}^2$, $\partial_i u^j$ can be writing in terms of $\omega^3$ and $\partial_3 u^3$, modulo some anisotropic Fourier-multipliers of order zero, more precisely we have, for $(i,j)\in \{ 1,2 \}^2$
\begin{equation*}
\partial_i u^j = \mathcal{R}_{i,j} \omega_3 + \tilde{\mathcal{R}}_{i,j} \partial_3 u^3,
\end{equation*}
where $\tilde{\mathcal{R}}_{i,j}$ and $ \mathcal{R}_{i,j}$ are zero-order Fourier multipliers bounded from $L^q$ into $L^q$ for all $q$ in $]1,\infty[$.
On the other hand, the quantity $\partial_k u^j\partial_j u^i \partial_k u^i$ contains always, at least, one term of the form $\partial_i u^j$ with $(i,j)\in \{1,2\}^2$ or $i=j=3$, we infer that
\begin{align}\label{estimates-to-use-in-both-theorems}
\frac{1}{2}\frac{d}{dt} \norm {\nabla u}_{L^2}^2 + \norm {\Delta u }_{L^2}^2 \lesssim \sum_{ \underset{(i,j)\in \{1,2\}}{(l,k,m,n)\in \{1,2,3\}}}\bigg|\int_{\mathbb{R}^3}\big(\mathcal{R}_{i,j} \omega_3 + \tilde{\mathcal{R}}_{i,j} \partial_3 u^3 \big). \partial_k u^l \partial_m u^n\bigg|
\end{align}  
Lemma \ref{lemmafortheorems2} gives then 
\begin{equation*}
\frac{1}{2}\frac{d}{dt} \norm {\nabla u}_{L^2}^2 + \norm {\nabla u }_{\dot{H}^1}^2 \leq  \frac{1}{10} \norm {\nabla u}_{\dot{H}^1}^2 + C \bigg( \norm {\partial_3 u_3}_{\dot{B}^{\alpha, s_p-\alpha}_{2,\infty}}^p + \norm {\omega^3}_{\dot{B}^{\beta, s_m-\beta}_{2,\infty}}^m\bigg) \norm {\nabla u}_{L^2}^2.
\end{equation*}
Gronwall lemma leads then to
\begin{equation}\label{global estimate anisotropic}
 \norm {\nabla u(t)}_{L^2}^2 + \int_0^t\norm {\nabla u(t') }_{\dot{H}^1}^2 dt' \lesssim \norm {\nabla u_0}_{L^2}exp \bigg[C \int_0^t\bigg( \norm {\partial_3 u_3 (t')}_{\dot{B}^{\alpha, s_p-\alpha}_{2,\infty}}^p + \norm {\omega^3(t')}_{\dot{B}^{\beta, s_m-\beta}_{2,\infty}}^m\bigg) dt'\bigg].
\end{equation}
That is if, for some $\alpha,\beta,p,m$ satisfying the hypothesis of Theorem \ref{anisotropicbesovthm}, the quantity in the right hand side of \eqref{global estimate anisotropic} is finite, then $u$ is bounded in $L^{\infty}_T(\dot{H}^1)$. By contraposition, if there is a blow-up of the $\dot{H}^1$ norm at some finite $T^*$ then, for all $\alpha,\beta,p,m$
\begin{align*}
\int_0^{T^*}\bigg( \norm {\partial_3 u_3 (t')}_{\dot{B}^{\alpha, s_p-\alpha}_{2,\infty}}^p + \norm {\omega^3(t')}_{\dot{B}^{\beta, s_m-\beta}_{2,\infty}}^m\bigg) dt' = \infty.
\end{align*}
Theorem \ref{anisotropicbesovthm} is proved.
\end{proof}
\subsection{\textit{Proof of Theorem \ref{negativebesovthm}}}
\begin{proof}[]
The proof of Theorem \ref{negativebesovthm} doesn't differ a lot from the previous one. We restart from \eqref{estimates-to-use-in-both-theorems}, applying lemma \ref{lemmafortheorems3} gives\footnote{Note that the case $q_i=\infty$ is included in the estimates proved in Lemma \ref{lemmafortheorems2}, however we did not say anything about this case in Theorem \ref{negativebesovthm} due to the lack of continuity of Riesz operators $\mathcal{R}_{i,j}$ and $\tilde{\mathcal{R}}_{i,j}$ from $L^{\infty}$ into $L^{\infty}$. }
\begin{equation*}
\frac{1}{2}\frac{d}{dt} \norm {\nabla u}_{L^2}^2 + \norm {\nabla u }_{\dot{H}^1}^2 \leq  \frac{1}{10} \norm {\nabla u}_{\dot{H}^1}^2 + C \bigg( \norm {\partial_3 u_3}_{\mathcal{B}_{q_1,p_1} }^{p_1} + \norm {\omega^3}_{\mathcal{B}_{q_2,p_2} }^{p_2}\bigg) \norm {\nabla u}_{L^2}^2.
\end{equation*}
Next, integrating in time interval $[0,t]$, and applying Gronwall lemma gives
\begin{equation*} 
 \norm {\nabla u(t)}_{L^2}^2 + \int_0^t\norm {\nabla u(t') }_{\dot{H}^1}^2 dt' \lesssim \norm {\nabla u_0}_{L^2}exp \bigg[C \int_0^t\bigg(  \norm {\partial_3 u_3}_{\mathcal{B}_{q_1,p_1} }^{p_1} + \norm {\omega^3}_{\mathcal{B}_{q_2,p_2} }^{p_2}\bigg) dt'\bigg].
\end{equation*}
Same arguments as in the conclusion of the previous theorem lead to the desired result.\\
Theorem \ref{negativebesovthm} is proved.
\end{proof}
\section{Proof of the three lemmas}
\subsection{\textit{Proof of lemma \ref{hardtermlemma}}}
\begin{proof} [] 
Let us recall a definition from \cite{Isab}. For $E\in \mathbb{R}^+$ and $a\in \mathcal{S}'(\mathbb{R}^3)$:
\begin{align*}
a_{\flat, E^{-1}}\overset{def}{=} \mathcal{F}^{-1}(\mathbf{1}_{B_h(0,E^{-1})} \hat{a}), \; \; a_{\sharp, E^{-1}}\overset{def}{=}\mathcal{F}^{-1}(\mathbf{1}_{B_h^c(0,E^{-1})} \hat{a}).
\end{align*}
Based on this decomposition, we write
\begin{align*}
J_{i\ell}(u,u^3)=J_{ E}^{\flat} + J_{ E}^{\sharp},
\end{align*}
where
\begin{align*}
J_{ E}^{\flat}\overset{\text{def}}{=}  \int_{\mathbb{R}^3} \partial_i u^3 \partial_3 u^{\ell}_{\flat,E^{-1}} \partial_i u^{\ell}, \text{and } \;  
J_{ E}^{\sharp}\overset{\text{def}}{=}  \int_{\mathbb{R}^3} \partial_i u^3 \partial_3 u^{\ell}_{\sharp,E^{-1}} \partial_i u^{\ell} .
\end{align*}
The main point consists in estimating $J_{ E}^{\sharp}$. Using Bony's decomposition with respect to the horizontal variables, to write
\begin{align*}
&J_{ E}^{\sharp}= J_{ E}^{\sharp,1} + J_{ E}^{\sharp,2} \;\; \; \text{with}\\
&J_{ E}^{\sharp,1}\overset{\text{def}}{=}  \displaystyle\int_{\mathbb{R}_v} \bigg( \int_{\mathbb{R}_h^2} \partial_i u^{\ell}(x_h,x_3) \tilde{T}^h_{\partial_i u^3(x_h,x_3)} \partial_3 u^{\ell}_{\sharp,E^{-1}}(x_h,x_3)  dx_h\bigg)dx_3\\
&J_{ E}^{\sharp,2}\overset{\text{def}}{=}  \displaystyle\int_{\mathbb{R}_v} \bigg( \sum_{k\in \mathbb{Z}}\int_{\mathbb{R}_h^2} \Delta_k^h\partial_i u^{\ell}(x_h,x_3) \tilde{\Delta}_k^h T^h_{\partial_3 u^{\ell}_{\sharp,E^{-1}}(x_h,x_3)}   \partial_i u^3(x_h,x_3)  dx_h\bigg)dx_3.
\end{align*}
$J_{ E}^{\sharp,1}$ can be estimated by duality then by using some product laws (lemma \ref{produclaw}), we obtain
\begin{align*}
J_{ E}^{\sharp,1} & \lesssim \norm {\nabla_h u}_{L^{\infty}_v(\dot{H}^{\frac{1}{2}}_h )} \norm { \tilde{T}^h_{\partial_i u^3} \partial_3 u^{\ell}_{\sharp,E^{-1}}}_{L^{1}_v(\dot{H}^{-\frac{1}{2}}_h )} \\
& \lesssim \norm {\nabla_h u}_{L^{\infty}_v(\dot{H}^{\frac{1}{2}}_h )}  \norm {\nabla_h u^3}_{L^2_v(\dot{H}^{-\frac{1}{2}}_h)} \norm {\partial_3 u}_{L^2_v(\dot{H}^{1}_h)}.
\end{align*}
Using then the inequality: $\norm {\nabla_h u}_{L^{\infty}_v(\dot{H}^{\frac{1}{2}}_h)} \lesssim \norm {\nabla_h u}_{\dot{H}^1(\mathbb{R}^3)}$ (see lemma \ref{lemma-from-isab}), we infer that
\begin{equation}\label{j1}
J_{ E}^{\sharp,1} \lesssim \norm {u^3}_{\dot{H}^{\frac{1}{2}}(\mathbb{R}^3)}\norm {\nabla_h u}_{\dot{H}^1(\mathbb{R}^3)}^2.
\end{equation}
In order to estimate $J_{ E}^{\sharp,2}$ we split it into a sum of a good term $J_{ E}^{\sharp,2,\textbf{G}}$ and a bad one $J_{ E}^{\sharp,2,\textbf{B}}$ based on the dominated frequencies of $\partial_3 u^{\ell}$
\begin{align*}
&\partial_3 u^{\ell}_{\sharp,E^{-1}} = \partial_3 u^{\ell,\textbf{G}}_{\sharp,E^{-1}} + \partial_3 u^{\ell,\textbf{B}}_{\sharp,E^{-1}} \;\;\; \text{with}\\
& \partial_3 u^{\ell,\textbf{G}}_{\sharp,E^{-1}} \overset{\text{def}}{=} \sum_{q \leq k} \Delta_k^h \Delta_q^v \partial_3 u^{\ell}_{\sharp,E^{-1}} \;\;\; \text{ and } \;\;\; \partial_3 u^{\ell,\textbf{B}}_{\sharp,E^{-1}} \overset{\text{def}}{=} \sum_{k < q} \Delta_k^h \Delta_q^v \partial_3 u^{\ell}_{\sharp,E^{-1}}.
\end{align*}
The good term can be easily estimated without using the fact that $u^{\ell}_{\sharp,E^{-1}}$ contains only the high horizontal frequencies, but only providing that the horizontal frequencies control the vertical ones. We proceed as follows, by using the product lemma \ref{produclaw} we find:
\begin{align*}
J_{ E}^{\sharp,2,\textbf{G}} \lesssim \norm {\nabla_h u^3}_{\dot{H}^{-\frac{1}{2}}_h(L^2_v)} \norm {\partial_3 u^{\ell,\textbf{G}}_{\sharp,E^{-1}}}_{\dot{H}^{\frac{3}{4}}_h(\dot{H}^{\frac{1}{4}}_v)}\norm {\nabla_h u}_{\dot{H}^{\frac{3}{4}}_h(\dot{H}^{\frac{1}{4}}_v)}.
\end{align*}
Lemma \ref{goodfrequencieslemma} in Appendix gives then
$$J_{ E}^{\sharp,2,\textbf{G}} \lesssim \norm {\nabla_h u^3}_{\dot{H}^{-\frac{1}{2}}_h(L^2_v)} \norm {\nabla_h u}_{\dot{H}^{\frac{3}{4}}_h(\dot{H}^{\frac{1}{4}}_v)}^2$$
which yields finally, by using lemma \ref{embedding-anisotropic} 
\begin{equation}\label{j2g}
J_{ E}^{\sharp,2,\textbf{G}} \lesssim \norm {u^3}_{\dot{H}^{\frac{1}{2}}} \norm {\nabla_h u}_{\dot{H}^1}^2
\end{equation}
In order to estimate the bad term $J_{ E}^{\sharp,2,\textbf{B}}$, we use the Bony's decomposition with respect to vertical variables to infer that
\begin{align}\label{J2B}
J_{ E}^{\sharp,2,\textbf{B}} \lesssim \sum_{q,k\in \mathbb{Z}}\norm {\Delta_k^h \Delta_q^v \nabla_h u}_{L^2(\mathbb{R}^3)} \big( \mathcal{I}_{k,q}^{(1)}+\mathcal{I}_{k,q}^{(2)}+\mathcal{I}_{k,q}^{(3)} \big),
\end{align}
where
\begin{align*}
\mathcal{I}_{k,q}^{(1)}&\overset{\text{def}}{=}\norm { S_{k-1}^hS_{q-1}^v (\partial_3 u^{\ell,\textbf{B}}_{\sharp,E^{-1}} )}_{L^{\infty}_v(L^{\infty}_h)}\norm {\Delta_k^h \Delta_q^v \nabla_h u^3}_{L^2(\mathbb{R}^3)}  \\
\mathcal{I}_{k,q}^{(2)}&\overset{\text{def}}{=} \norm { S_{k-1}^h\Delta_q^v (\partial_3 u^{\ell,\textbf{B}}_{\sharp,E^{-1}} )}_{L^2_v(L^{\infty}_h)}  \norm {\Delta_k^h S_{q-1}^v \nabla_h u^3}_{L^{\infty}_v(L^2_h)} \\
\mathcal{I}_{k,q}^{(3)}&\overset{\text{def}}{=} 2^{\frac{q}{2}}\sum_{j\geq q - N_0} \norm { S_{k-1}^h\Delta_j^v (\partial_3 u^{\ell,\textbf{B}}_{\sharp,E^{-1}}) }_{L^2_v(L^{\infty}_h)}  \norm {\Delta_k^h \tilde{\Delta}_j^v \nabla_h u^3}_{L^2(\mathbb{R}^3)} .
\end{align*}
The estimates of these terms are based on lemma \ref{badfrequencieslemma} proved in Appendix, by taking $ f^{\textbf{B}}_{\sharp,E^{-1}}= \partial_3 u^{\ell,\textbf{B}}_{\sharp,E^{-1}} $.\\
We use inequality \eqref{bad2} from lemma \ref{badfrequencieslemma} to estimate $\mathcal{I}_{k,q}^{(1)}$, which gives
\begin{align*}
\mathcal{I}_{k,q}^{(1)}& \lesssim \big( log(E2^q+e)\big)^{\frac{1}{2}} c_q 2^{\frac{q}{2}} \norm {\nabla_h \partial_3 u}_{L^2(\mathbb{R}^3)}2^{k}\norm {\Delta_k^h \Delta_q^v  u^3}_{L^2(\mathbb{R}^3)}  \\
&\lesssim  c_q^2 c_k 2^{\frac{q}{2}} 2^{\frac{k}{2}} \norm {\nabla_h \partial_3 u}_{L^2(\mathbb{R}^3)} \norm {u^3}_{\dot{H}^{\frac{1}{2}}_{log_{v},E}}.
\end{align*}
Finally we obtain
\begin{equation}\label{I_1}
\mathcal{I}_{k,q}^{(1)} \lesssim c_k 2^{\frac{q}{2}} 2^{\frac{k}{2}} \norm {\nabla_h \partial_3 u}_{L^2(\mathbb{R}^3)} \norm {u^3}_{\dot{H}^{\frac{1}{2}}_{log_{v},E}}.
\end{equation}
In order to estimate $\mathcal{I}_{k,q}^{(2)}$ we use inequality \eqref{bad1}, we infer that
\begin{align*}
\mathcal{I}_{k,q}^{(2)}& \lesssim \big( log(E2^q+e)\big)^{\frac{1}{2}} c_q  \norm {\nabla_h \partial_3 u}_{L^2(\mathbb{R}^3)}2^{k}\norm {\Delta_k^h S_{q-1}^v  u^3}_{L^{\infty}_v(L^2_h)} \\
&\lesssim  c_q 2^{\frac{k}{2}} \norm {\nabla_h \partial_3 u}_{L^2(\mathbb{R}^3)} \big( log(E2^q+e)\big)^{\frac{1}{2}} 2^{\frac{k}{2}}\norm {\Delta_k^h S_{q-1}^v u^3}_{L^{\infty}_v(L^2_h)}.
\end{align*}
Next, we use the following estimate
\begin{align*}
2^{-\frac{q}{2}} \big( log(E2^q+e)\big)^{\frac{1}{2}} 2^{\frac{k}{2}}\norm {\Delta_k^h S_{q-1}^v u^3}_{L^{\infty}_v(L^2_h)} &\lesssim \sum_{m\leq q} \big(  2^{(m-q)}  log(E2^q+e) \big)^{\frac{1}{2}} 2^{\frac{k}{2}}\norm {\Delta_k^h \Delta_m^v u^3}_{L^2(\mathbb{R}^3)},
\end{align*}
together with the fact that
\begin{align*}
log(E2^q+e) &\leq log(2^{q-m}(E2^m+e)), \;\;\; \forall m\leq q\\
& \leq log(E2^m+e) + (q-m)\\
&\lesssim log(E2^m+e)(1+ (q-m)).
\end{align*}
This leads to
\begin{align*}
2^{-\frac{q}{2}} \big( log(E2^q+e)\big)^{\frac{1}{2}} 2^{\frac{k}{2}}\norm {\Delta_k^h S_{q-1}^v u^3}_{L^{\infty}_v(L^2_h)} &\lesssim \sum_{m\leq q} (\sigma_{q-m} c_m) c_k \norm {u^3}_{\dot{H}^{\frac{1}{2}}_{log_{v},E}},
\end{align*}
where $\sigma_j \bydef \displaystyle\frac{\sqrt{1+j}}{2^{\frac{j}{2}}} \in \ell_j^1(\mathbb{N}) $. By using convolution inequality, we deduce that
\begin{equation}\label{I_2}
\mathcal{I}_{k,q}^{(2)} \lesssim   c_k 2^{\frac{k}{2}} 2^{\frac{q}{2}} \norm {\nabla_h \partial_3 u}_{L^2(\mathbb{R}^3)} \norm {u^3}_{\dot{H}^{\frac{1}{2}}_{log_{v},E}} .
\end{equation}
Finally, in order to estimate $\mathcal{I}_{k,q}^{(3)}$, we use again inequality \eqref{bad2} from lemma \ref{badfrequencieslemma} bellow, we obtain
\begin{align*}
\mathcal{I}_{k,q}^{(3)} &\lesssim 2^{\frac{q}{2}}2^{\frac{k}{2}}\bigg(\sum_{j\geq q-N_0}  \big( log(E2^j+e)\big)^{\frac{1}{2}} c_j 2^{\frac{k}{2}}\norm {\Delta_k^h \Delta_{j}^v  u^3}_{L^2(\mathbb{R}^3)} \bigg) \norm {\nabla_h \partial_3 u}_{L^2(\mathbb{R}^3)} \\
&\lesssim c_k 2^{\frac{q}{2}}2^{\frac{k}{2}} \norm {\nabla_h \partial_3 u}_{L^2(\mathbb{R}^3)} \norm {u^3}_{\dot{H}^{\frac{1}{2}}_{log_{v},E}} .
\end{align*} 
Together with \eqref{I_1} and \eqref{I_2} yield
\begin{equation*}
\sum_{i\in \{1,2,3 \}} \mathcal{I}_{k,q}^{(i)}  \lesssim c_k 2^{\frac{q}{2}}2^{\frac{k}{2}} \norm {\nabla_h u}_{H^1(\mathbb{R}^3)} \norm {u^3}_{\dot{H}^{\frac{1}{2}}_{log_{v},E}}
\end{equation*}
Plugging this last one into \eqref{J2B} gives
\begin{align*}
J_{ E}^{\sharp,2,\textbf{B}} &\lesssim  \bigg(\sum_{k,q\in \mathbb{Z}} c_k 2^{\frac{q}{2}}2^{\frac{k}{2}} \norm {\Delta_k^h \Delta_q^v \nabla_h u}_{L^2(\mathbb{R}^3)} \bigg)  \norm {\nabla_h u}_{\dot{H}^1(\mathbb{R}^3)} \norm {u^3}_{\dot{H}^{\frac{1}{2}}_{log_{v},E}}\\
& \lesssim \norm {\nabla_h u}_{\dot{H}^{\frac{1}{2}}_h(\dot{B}^{\frac{1}{2}}_{2,1})_v} \norm {\nabla_h u}_{\dot{H}^1(\mathbb{R}^3)} \norm {u^3}_{\dot{H}^{\frac{1}{2}}_{log_{v},E}}.
\end{align*}
Lemma \ref{embedding-anisotropic} then gives
\begin{equation}\label{j2b}
J_{ E}^{\sharp,2,\textbf{B}} \lesssim  \norm {\nabla_h u}_{\dot{H}^1(\mathbb{R}^3)}^2 \norm {u^3}_{\dot{H}^{\frac{1}{2}}_{log_{v},E}}.
\end{equation}
From \eqref{j1}, \eqref{j2g} and \eqref{j2b} we deduce
\begin{equation*}
J_{ E}^{\sharp} \lesssim  \norm {\nabla_h u}_{\dot{H}^1(\mathbb{R}^3)}^2 \norm {u^3}_{\dot{H}^{\frac{1}{2}}_{log_{v},E}}.
\end{equation*}
$J_{ E}^{\flat}$ can be estimated along the same lines as in \cite{Isab}, by using the product law $(\dot{B}^{1}_{2,1})_h \times \dot{H}^{\frac{1}{2}}_h \subset \dot{H}^{\frac{1}{2}}_h$, together with the embedding $\dot{H}^1(\mathbb{R}^3) \hookrightarrow L^{\infty}_v(\dot{H}^{\frac{1}{2}}_h)$ (see lemma \ref{lemma-from-isab} in Appendix), we infer that
\begin{align*}
J_{ E}^{\flat} &\lesssim \norm {\nabla_h u^3}_{L^2_v(\dot{H}^{-\frac{1}{2}})_h}\norm {\partial_3 u^{\ell}_{\flat, E^{-1}} \partial_i u^{\ell}}_{L^2_v(\dot{H}^{\frac{1}{2}}_h)}\\
&\lesssim \norm {u^3}_{\dot{H}^{\frac{1}{2}}}\norm {\partial_3 u^{\ell}_{\flat, E^{-1}}}_{L^2_v(\dot{B}^{1}_{2,1})_h} \norm {\nabla_h u}_{L^{\infty}_v(\dot{H}^{\frac{1}{2}}_h )}\\
&\lesssim \norm {u^3}_{\dot{H}^{\frac{1}{2}}}\norm {\nabla_h u}_{\dot{H}^1}\frac{\norm {{\partial_3 u}}_{L^2}}{E} \\
&\lesssim \frac{1}{100} \norm {\nabla_h u}_{\dot{H}^1} + C \norm {u^3}_{\dot{H}^{\frac{1}{2}}}^2 \frac{\norm {{\partial_3 u}}_{L^2}^2}{E^2}.
\end{align*}
Lemma \ref{hardtermlemma} is then proved.
\end{proof}
\subsection{\textit{Proof of lemma \ref{lemmafortheorems3}}}
\begin{proof}[]
Let $p\in [2,4]$ and $\alpha\in \big[0,\frac{2}{p}-\frac{1}{2}\big] $. We define $q$ and $\theta$ such that
\begin{equation}\label{numerology2}
\frac{2}{q}\bydef 1-\frac{1}{p},
\end{equation}
\begin{equation}\label{numerology3}
\theta \bydef \frac{1}{2} + \frac{\alpha}{2} - \frac{1}{p}.
\end{equation}
One may check that 
\begin{align*}
q \in ]2,\infty[ \; \; \; \text{and} \; \; \; \theta \in \bigg[0, \frac{2}{q}\bigg] \cap \bigg[0,\frac{1}{2}\bigg],
\end{align*}
which allow us to use the following embedding, due to lemmas \ref{embedding-anisotropic} and \ref{interpolation-lemma}
\begin{equation}\label{embedding5}
L^{\infty}_T (L^2(\mathbb{R}^3) )\cap  L^{2}_T (\dot{H}^1(\mathbb{R}^3) ) \hookrightarrow L^q_T(\dot{B}^{\frac{2}{q}}_{2,1}(\mathbb{R}^3)) \hookrightarrow L^q_T\big( (\dot{B}^{\frac{2}{q}- \theta}_{2,1})_h(\dot{B}^{ \theta}_{2,1})_v \big) .
\end{equation}
Thus, by using lemma \ref{produclaw}, if $g\in (\dot{B}^{\frac{2}{q}- \theta}_{2,1})_h(\dot{B}^{ \theta}_{2,1})_v$ then $g\cdot g \in (\dot{B}^{\frac{4}{q}- 2\theta - 1}_{2,1})_h(\dot{B}^{ 2\theta-\frac{1}{2}}_{2,1})_v$.\\
By virtue of \eqref{numerology2}, \eqref{numerology3} and embedding \eqref{embedding5}, we infer that
\begin{equation*}
\norm {g\cdot g}_{(\dot{B}^{-\alpha}_{2,1})_h(\dot{B}^{ -\frac{2}{p}+\frac{1}{2}+\alpha }_{2,1})_v} \lesssim \norm g_{(\dot{B}^{\frac{2}{q}- \theta}_{2,1})_h(\dot{B}^{ \theta}_{2,1})_v}^2,
\end{equation*}
which gives by duality, embedding \eqref{embedding5} and lemma \ref{interpolation-lemma} 
\begin{align*}
\big| \big( fg|g  \big)_{L^2} \big|& \lesssim \norm f_{(\dot{B}^{ \alpha}_{2,\infty})_h(\dot{B}^{  \frac{2}{p}-\frac{1}{2}-\alpha }_{2,\infty})_v} \norm {g\cdot g}_{(\dot{B}^{-\alpha}_{2,1})_h(\dot{B}^{ -\frac{2}{p}+\frac{1}{2}+\alpha }_{2,1})_v}\\
&\lesssim \norm f_{(\dot{B}^{ \alpha}_{2,\infty})_h(\dot{B}^{ s_p-\alpha }_{2,\infty})_v} \norm g_{\dot{B}^{\frac{2}{q}}_{2,1}}^2\\
&\lesssim \norm f_{(\dot{B}^{ \alpha}_{2,\infty})_h(\dot{B}^{ s_p-\alpha }_{2,\infty})_v} \norm g_{L^2}^{\frac{2}{p}} \norm g_{\dot{H}^1}^{2(1-\frac{2}{p})}.
\end{align*}
Finally we obtain
\begin{align*}
\big| \big( fg|g  \big)_{L^2} \big| \leq \frac{1}{10}  \norm g_{\dot{H}^1}^2 + C\norm f_{(\dot{B}^{ \alpha}_{2,\infty})_h(\dot{B}^{ s_p-\alpha }_{2,\infty})_v} ^p\norm g_{L^2}^2.
\end{align*}
Lemma \ref{lemmafortheorems3} is proved.
\end{proof}
\subsection{\textit{Proof of lemma \ref{lemmafortheorems2}}}
\begin{proof}[]
According to lemma \ref{interpolation-lemma} in Appendix, in particular inequality \eqref{ineterpolation-inequa} gives
\begin{equation}\label{ineterpolation-inequa2}
\norm {g(t,.)}_{\dot{B}^{\frac{2}{m}}_{2,1}(\mathbb{R}^3))} \lesssim  \norm {g(t,.)}_{\dot{H}^1(\mathbb{R}^3)}^{\frac{2}{m}} \norm {g(t,.)}_{L^2(\mathbb{R}^3)}^{1-\frac{2}{m}},\;\; \forall m\in ]2,\infty[.
\end{equation} 
We use then the Bony's decomposition to study the product $g\cdot g$. \\
Let $(q,p)\in [1,\infty]^2$ satisfying
\begin{equation*}
q\in [3,\infty] \;\;\; \text{ and }\; \; \; \frac{3}{q}+\frac{2}{p} \in ]1,2[.
\end{equation*}
Let $(m_1, m_2)$ be in $[2,\infty] \times ]2,\infty[, $ given by
\begin{equation}\label{numerologiy}
\frac{2}{3m_1} \bydef \frac{1}{q} \;\;\; \text{ and }\; \; \;  2\big( 1-\frac{1}{m_2} \big)\bydef \frac{3}{q}+ \frac{2}{p}\in ]1,2[\iff m_2 \in ]2,\infty[.
\end{equation}
Let us define the real number $N_{m_1}$ associated to the embedding $\dot{H}^{\frac{2}{{m_1}}}(\mathbb{R}^3)$ in $L^{N_{m_1}}(\mathbb{R}^3)$
\begin{equation*}
\frac{1}{N_{m_1}}\bydef \frac{1}{2} -\frac{2}{3m_1} \in \bigg [ \frac{1}{6}, \frac{1}{2} \bigg ] .
\end{equation*}
Let us also define $r$ to be the conjugate of $q$, that is
\begin{equation*}
\frac{1}{r} \bydef 1-\frac{1}{q} \in \bigg[ \frac{2}{3}, 1  \bigg ].
\end{equation*}
We write
$$\Delta_j (g\cdot g)= 2\Delta_j T_g(g) +\Delta_j R(g,g)$$
where $T$ and $R$ are the operators associated to the Bony's decomposition, defined in the Appendix.\\
We turn now to estimate the two parts of $\Delta_j(g\cdot g)$. We have
\begin{align*}
\norm {\Delta_j T_g(g)}_{L^{r }} &\lesssim \norm {S_{j-1} g}_{L^{N_{m_1}}} \norm {\Delta_j g}_{L^2}\\
&\lesssim \norm {g}_{L^{N_{m_1}}} d_j 2^{-j\frac{2}{m_2}} \norm {g}_{\dot{B}^{\frac{2}{m_2}}_{2,1}} ,
\end{align*}
using then the embedding
\begin{equation}\label{embedding1}
\norm {g}_{L^{N_{m_1}}} \lesssim \norm g_{\dot{H}^{\frac{2}{m_1}}},
\end{equation}
together with the interpolation inequality \eqref{ineterpolation-inequa2} gives
\begin{align}\label{T_gg}
\norm {\Delta_jT_g(g)}_{\dot{B}^{\frac{2}{m_2}}_{r ,1}} &\lesssim  2^{-j\frac{2}{m_2}} d_j  \norm g_{\dot{H}^1}^{\frac{2}{m_1}+\frac{2}{m_2}} \norm {g}_{L^2}^{2-\frac{2}{m_1}+\frac{2}{m_2} }.
\end{align}
For the remainder term, we proceed almost similarly
\begin{align*}
\norm {\Delta_j R(g,g)}_{L^{r }} &\lesssim \sum_{j'\geq j-5} \norm {\tilde{\Delta}_{j'} g}_{L^{N_{m_1}}} \norm {\Delta_j' g}_{L^2}\\
&\lesssim 2^{-j\frac{2}{m_2}}\sum_{j'\geq j-5}\big( d_{j'} 2^{-(j'-j)\frac{2}{m_2}}\big)  \norm {g}_{L^{N_{m_1}}}\norm {g}_{\dot{B}^{\frac{2}{m_2}}_{2,1}}.
\end{align*}
Where $\tilde{\Delta_j'}\bydef \displaystyle\sum_{i\in \{-1,0,1 \}} \Delta_{j'+i}$.
By convolution inequality, interpolation inequality \eqref{ineterpolation-inequa2} and the embedding one \eqref{embedding1}, we get
\begin{equation*}
\norm {\Delta_j R(g,g)}_{L^{r }} \lesssim  2^{-j\frac{2}{m_2}} d_j  \norm g_{\dot{H}^1}^{\frac{2}{m_1}+\frac{2}{m_2}} \norm {g}_{L^2}^{2-\frac{2}{m_1}+\frac{2}{m_2} },
\end{equation*}
which gives, together with \eqref{T_gg}
\begin{equation*}
\norm {\Delta_j (g \cdot g)}_{L^{r }}\lesssim  2^{-j\frac{2}{m_2}} d_j  \norm g_{\dot{H}^1}^{\frac{2}{m_1}+\frac{2}{m_2}} \norm {g}_{L^2}^{2-\frac{2}{m_1}+\frac{2}{m_2} }.
\end{equation*}
On the other hand, by duality, we get
\begin{align*}
\big| \big( fg|g  \big)_{L^2} \big|& \lesssim \sum_{j\in \mathbb{Z}} \norm {\Delta_j f}_{L^{q}} \norm {\Delta_j (g \cdot g)}_{L^{r}}\\
& \lesssim \sum_{j\in \mathbb{Z}} \big(2^{-j\frac{2}{m_2}} \norm {\Delta_j f}_{L^{q}} d_j \big)  \norm g_{\dot{H}^1}^{\frac{2}{m_1}+\frac{2}{m_2}} \norm {g}_{L^2}^{2-\frac{2}{m_1}+\frac{2}{m_2} }  \\
&\lesssim \norm {f}_{\dot{B}_{q,\infty}^{-\frac{2}{m_2}}}\norm g_{\dot{H}^1}^{\frac{2}{m_1}+\frac{2}{m_2}} \norm {g}_{L^2}^{2-\frac{2}{m_1}+\frac{2}{m_2} }. 
\end{align*}
By virtue of \eqref{numerologiy} we have
\begin{equation*}
1- \bigg( \frac{1}{m_1}+\frac{1}{m_2} \bigg)= \frac{1}{p} \;\;\; \text{ and } \; \; \; -\frac{2}{m_2}= \frac{3}{q}+ \frac{2}{p}-2.
\end{equation*}
This gives
\begin{equation*}
\big| \big( fg|g  \big)_{L^2} \big| \leq \frac{1}{10} \norm g_{\dot{H}^1}^2 +C \norm {f}_{\mathcal{B}_{q,p}} ^{p} \norm g_{L^2}^2.
\end{equation*}
Lemma \ref{lemmafortheorems2} is proved.
\end{proof}
\appendix	
\section*{Appendix}
  \renewcommand{\thesection}{\Alph{section}}
  
\section{Functional framework} In this part we recall some notions and definitions used in the previous sections.\\
Let us first recall some notions of the Littlewood-Paley theory, the anisotropic Besov spaces used in this paper and some of their properties. The anisotropic version used here is crucial, for more details about that and for more applications one may see for instance \cite{Chemin1,Ifti1,Ifti2,Ifti3,Haroune,Paicu2}.\\

 Let $(\psi,\varphi)$ be a couple of smooth functions with value in $[0,1]$ satisfying: 
%$\text{Supp} \psi \subset \{ \xi \in \mathbb{R} : |\xi| \leq \frac{4}{3}\}$ and in the shell $\{\xi \in \mathbb{R} :\frac{3}{4} \leq |\xi| \leq \frac{8}{3}  \}$ and:
\begin{alignat*}{2}
&\text{Supp } \psi \subset \{ \xi \in \mathbb{R} : |\xi| \leq \frac{4}{3}\}, 
\quad &&\text{Supp } \varphi \subset \{\xi \in \mathbb{R} :\frac{3}{4} \leq |\xi| \leq \frac{8}{3} \} \\
&\psi(\xi) + \sum_{q\in \mathbb{N}} \varphi(2^{-q}\xi) = 1 \;\; \forall \xi \in \mathbb{R}, 
\quad 
&&\sum_{q\in \mathbb{Z}} \varphi(2^{-q}\xi) = 1 \;\; \forall \xi \in \mathbb{R}\backslash \{0\}.
\end{alignat*}
Let $a$ be a tempered distribution, $\hat{a}=\mathcal{F}(a)$ its Fourier transform and $\mathcal{F}^{-1}$ 
denotes the inverse of $\mathcal{F}$. We define the homogeneous dyadic blocks $\Delta_q$ by setting
\begin{alignat*}{2}
&\Delta^v_q a \bydef \mathcal{F}^{-1}\big(\varphi (2^{-q}|\xi_3| \hat{a} ) \big),\; \forall \; q\in \mathbb{Z}, 
\quad && \Delta^h_j a \bydef \mathcal{F}^{-1}\big(\varphi (2^{-j}|\xi_h| \hat{a} ) \big),\; \forall \; j\in \mathbb{Z}, \\
& S_q^v \bydef  \sum_{q'< q} \Delta _{q' }^v, \ \forall q \in \mathbb Z, \quad 
&& S_j^h \bydef  \sum_{j'< j} \Delta _{j' }^h, \ \forall j \in \mathbb Z.
\end{alignat*}
Moreover, in all the situations, i.e. for 
$\Delta,S$ with the same index of direction (horizontal or vertical) it holds:
\begin{align*}
&\Delta_m\Delta_{m'} a =0 \; \text{if} \; |m-m'| \geq 2 \\
&\Delta_m\big(S_{m'-1}a\Delta_{m'} a\big) =0 \; \text{if} \; |m-m'| \geq 5 \\
&\Delta_m\sum_{i\in \{0,1,-1 \}}\sum_{m'\in \mathbb{Z}}(\Delta_{m'+i}a\Delta_{m'} a\big)=\Delta_m\sum_{i\in \{0,1,-1 \}}\sum_{m'\geq m-5}(\Delta_{m'+i}a\Delta_{m'} a\big),
\end{align*} 
 We should recall the so-called Bony decomposition (see \cite{Chemin1})
\begin{alignat*}{2}
&ab= T_a(b) + T_b(a) + R(a,b), \quad &&\\
&T_a(b)\bydef \sum_{q\in Z} S_{q-1} a \Delta_q b, \quad 
&&R(a,b)\bydef \sum_{i\in \{0,1,-1 \}}\sum_{q\in Z} \Delta_{q+i} a \Delta_q b.
\end{alignat*}
It is also useful sometimes to use the following version
$$ab= \widetilde{T}_ab + T_ba,$$
where
$$\widetilde{T}_ab \bydef \sum_{q\in \mathbb{Z}} S_{q+2}a \Delta_q b.$$
Here again all the situations may be considered however particular cases must be precised by using the adequate 
notations. For instance if we consider the version for the vertical variable, we have to 
add the exponent $^{v}$ in all the operators $T_a,T_b,R,S_q$ and $\Delta_q$. \\ 

Next, we recall the definition of the anisotropic Besov spaces. See \cite{Chemin-Zhang} for more details.    
\begin{defin}
Let $s,t$ be two real numbers and let $p_1,p_2,q_1,q_2$ be in $[1,+\infty]$, we define the space $(\dot{B}^t_{p_1,q_1})_h(\dot{B}^s_{p_2,q_2})_v$ as the space of tempered distributions $u$ such that 
$$
\norm u _{(\dot{B}^t_{p_1,q_1})_h(\dot{B}^s_{p_2,q_2})_v}\bydef \norm { 2^{kt}2^{js} \norm {\Delta_k^h \Delta_j^v u}_{L^{p_1}_hL^{p_2}_v}}_{\ell_k^{q_1}(\mathbb{Z};\ell_j^{q_2}(\mathbb{Z})) } < \infty .
$$
In the situation where $q_1=q_2=q$ and $p_1=p_2=p$, we use the notation $\dot{B}_{p,q}^{t,s}\bydef(\dot{B}^t_{p,q})_h(\dot{B}^s_{p,q})_v$. If $p=q=2$ then this last space is equivalent to $\dot{H}^{t,s}$. More precisely, we have:
\begin{equation*}
\norm a _{\dot{B}_{2,2}^{t,s}}^2 \approx \norm a_{\dot{H}^{t,s}}^2 \bydef \int_{\mathbb{R}^3}|\xi_h|^{2t} |\xi_v|^{2s} |\hat{a}(\xi)|^2 d\xi.
\end{equation*}
\end{defin}
\section{Technical lemmas} In this part we present seven lemmas used in the previous section, we will prove the three last ones and give references for the four first ones.\\
We start by recalling a Bernstein type lemma from \cite{Chemin,Chemin2}    
\begin{lemma} \label{ber}
Let $\mathcal{B}_h$ (resp. $\mathcal{B}_v$) be a ball of $\mathbb{R}^2_h$ (resp. $\mathbb{R}_v$) and $\mathcal{C}_h$ (resp. $\mathcal{C}_v$) be a ring of $\mathbb{R}^2_h$ (resp. $\mathbb{R}_v$). Let also $a$ be a tempered distribution and $\hat{a}$ its Fourier transform. Then for $1\leq p_2\leq p_1 \leq \infty$ and $1\leq q_2\leq q_1\leq \infty$ we have:
\begin{align*}
&\text{Supp }\hat{a} \subset 2^k\mathcal{B}_h \ \Longrightarrow \ 
\norm {\partial^{\alpha}_{x_h}a}_{L^{p_1}_h(L^{q_1}_v)} \lesssim 2^{k\big( |\alpha| + 2\big( \frac{1}{p_2}- \frac{1}{p_1}\big) \big)} \norm {a}_{L^{p_2}_h(L^{q_1}_v)} \\ 
&\text{Supp }\hat{a} \subset 2^l\mathcal{B}_v \ \Longrightarrow \  
\norm {\partial^{\beta}_{x_3}a}_{L^{p_1}_h(L^{q_1}_v)} \lesssim 2^{l\big( \beta + \big( \frac{1}{q_2}- \frac{1}{q_1}\big) \big)} \norm {a}_{L^{p_1}_h(L^{q_2}_v)} \\
&\text{Supp }\hat{a} \subset 2^k\mathcal{C}_h \ \Longrightarrow \
\norm {a}_{L^{p_1}_h(L^{q_1}_v)} \lesssim 2^{-kN} \sup_{|\alpha|=N}\norm {\partial^{\alpha}_{x_h}a}_{L^{p_1}_h(L^{q_1}_v)} \\
&\text{Supp }\hat{a} \subset 2^l\mathcal{C}_v \ \Longrightarrow \
\norm {a}_{L^{p_1}_h(L^{q_1}_v)} \lesssim 2^{-lN} \norm {\partial^{N}_{x_3}a}_{L^{p_1}_h(L^{q_1}_v)}.
\end{align*}
\end{lemma}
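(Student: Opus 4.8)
The plan is to prove all four inequalities by the standard Bernstein mechanism: writing $a$ as a convolution against a rescaled smooth kernel dictated by the frequency support, and then combining Young's convolution inequality with Minkowski's integral inequality to cope with the mixed norm. Throughout I fix two smooth cut-offs: for the ball estimates a function $\chi$ whose (horizontal or vertical) Fourier transform equals $1$ on a neighbourhood of the relevant ball, and for the ring estimates a function $\tilde\varphi$ supported in a slightly fattened ring and equal to $1$ on it. The only role of the anisotropy is that the convolution is performed in the horizontal variables $x_h\in\mathbb{R}^2$ with $x_3$ frozen (for the first and third estimates), or in the vertical variable $x_3\in\mathbb{R}$ with $x_h$ frozen (for the second and fourth).

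For the first estimate I would use $\hat a = \chi(2^{-k}\xi_h)\hat a$, so that $a=\phi_k^h *_h a$ with $\phi_k^h(x_h)=2^{2k}\phi(2^kx_h)$ and $\phi=\mathcal{F}^{-1}\chi$. Differentiating the kernel gives $\partial^\alpha_{x_h}a = 2^{k|\alpha|}\big(2^{2k}(\partial^\alpha\phi)(2^k\cdot)\big)*_h a$. Taking the $L^{q_1}_v$ norm first and pushing it inside the horizontal convolution by Minkowski, then applying Young's inequality in $x_h$ with $1/r=1-(1/p_2-1/p_1)$, leaves the kernel norm $\norm{2^{2k}(\partial^\alpha\phi)(2^k\cdot)}_{L^r_h}$, which scales exactly as $2^{2k(1/p_2-1/p_1)}$ by the change of variables $y_h\mapsto 2^{k}y_h$. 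This produces the claimed factor $2^{k(|\alpha|+2(1/p_2-1/p_1))}$. The second estimate is identical but one-dimensional: convolving in $x_3$ against $\psi_l^v(x_3)=2^l\psi(2^lx_3)$, the single power of the dimension replaces $2(1/p_2-1/p_1)$ by $1/q_2-1/q_1$, and here no Minkowski step is needed since the convolution and the inner norm act on the same variable.

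For the ring estimates the point is instead to \emph{recover} $a$ from its top-order derivatives. Writing $\zeta=2^{-k}\xi_h$ and using the multinomial identity $|\zeta|^{2N}=\sum_{|\alpha|=N}\binom{N}{\alpha}(i\zeta)^\alpha\overline{(i\zeta)^\alpha}$, valid with $|\zeta|$ bounded away from $0$ on the fattened ring, I set $g_\alpha(\zeta)=\binom{N}{\alpha}\tilde\varphi(\zeta)\overline{(i\zeta)^\alpha}|\zeta|^{-2N}$, a smooth compactly supported function, so that $\tilde\varphi(\zeta)=\sum_{|\alpha|=N}g_\alpha(\zeta)(i\zeta)^\alpha$. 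Substituting $\zeta=2^{-k}\xi_h$ and multiplying by $\hat a$ gives $\hat a = 2^{-kN}\sum_{|\alpha|=N}g_\alpha(2^{-k}\xi_h)\widehat{\partial^\alpha_{x_h}a}$, hence $a=2^{-kN}\sum_{|\alpha|=N}G_\alpha^k *_h \partial^\alpha_{x_h}a$ with $\norm{G_\alpha^k}_{L^1_h}$ independent of $k$. Young's inequality (again with Minkowski for the vertical norm) then yields $2^{-kN}\sup_{|\alpha|=N}\norm{\partial^\alpha_{x_h}a}$. The fourth estimate is the one-dimensional analogue, where on the vertical ring one simply divides by $(i\zeta)^N$ to write $\tilde\varphi(\zeta)=g(\zeta)(i\zeta)^N$, so that only the single derivative $\partial^N_{x_3}$ appears.

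The only genuinely delicate point is the mixed-norm bookkeeping: because the outer norm is horizontal while the horizontal convolutions act on that same variable, one must insert Minkowski's integral inequality to bring the inner $L^{q_1}_v$ norm through the horizontal convolution before invoking Young. Once this interchange is justified, the four estimates reduce to the scalar scaling computations above, so I expect this step, rather than any single inequality, to be where care is required; everything else is a direct change of variables.
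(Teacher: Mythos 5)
Your proposal is correct; the paper itself does not prove this lemma but merely recalls it from \cite{Chemin,Chemin2}, and your argument is precisely the classical proof found there: convolution against rescaled cut-off kernels, Young's inequality with the scaling change of variables for the ball estimates, division by $(i\zeta)^\alpha$ on the ring for the reverse estimates, and Minkowski's integral inequality to pass the inner vertical norm through the horizontal convolutions. You correctly identify the mixed-norm interchange as the only point specific to the anisotropic setting, so nothing is missing.
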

 Let us also recall an anisotropic version of the usual product laws in Besov spaces (see Lemma 4.5 from \cite{Chemin2})
\begin{lemma}\label{produclaw}
Let $q\geq 1$, $p_1\geq p_2\geq 1$ with $\frac{1}{p_1}+ \frac{1}{p_2} \leq 1$, and $s_1< \frac{2}{p_1}$, $s_2< \frac{2}{p_2}$ (resp. $s_1\leq \frac{2}{p_1},$ $s_2\leq \frac{2}{p_2}$ if $q=1$) with $s_1+s_2>0$. Let $\sigma_1<\frac{1}{p_1}$, $\sigma_2 < \frac{1}{p_2}$ (resp. $\sigma_1\leq \frac{1}{p_1}$, $\sigma_2 \leq \frac{1}{p_2}$ if $q=1$) with $\sigma_1+\sigma_2>0$. Then for $a$ in $\dot{B}^{s_1,\sigma_1}_{p_1,q}$ and $b$ in $\dot{B}^{s_2,\sigma_2}_{p_2,q}$, the product $ab$ belongs to $ \dot{B}^{s_1+ s_2 -\frac{2}{p_2},\sigma_1+ \sigma_2 - \frac{1}{p_2}}_{p_1,q}$ and we have
\begin{equation*}
\norm {ab}_{\dot{B}^{s_1+ s_2 -\frac{2}{p_2},\sigma_1+ \sigma_2 - \frac{1}{p_2}}_{p_1,q}} \lesssim \norm a_{\dot{B}^{s_1,\sigma_1}_{p_1,q}} \norm b_{\dot{B}^{s_2,\sigma_2}_{p_2,q}}
\end{equation*}
\end{lemma}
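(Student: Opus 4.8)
The plan is to reduce the estimate to a frequency-by-frequency analysis via a double (anisotropic) Bony decomposition — first in the horizontal variable, then in the vertical one — and to recombine the dyadic pieces using the anisotropic Bernstein inequalities of Lemma \ref{ber}. First I would write, in the horizontal variable,
$$ab = T^h_a b + T^h_b a + R^h(a,b),$$
and then decompose each of these three factors a second time in the vertical variable, expressing the product as a finite sum of building blocks of the form $(\text{low}^h\cdot\text{high}^h)(\text{low}^v\cdot\text{high}^v)$ and its analogues. By the symmetry $a\leftrightarrow b$ (together with $s_i,\sigma_i,p_i$) it suffices to treat one paraproduct and one remainder in each direction, i.e. four representative terms $T^hT^v$, $T^hR^v$, $R^hT^v$, $R^hR^v$.

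Next, for each building block I would localize at horizontal frequency $2^k$ and vertical frequency $2^j$ and estimate $\norm{\Delta_k^h\Delta_j^v(\cdot)}_{L^{p_1}_h(L^{p_1}_v)}$. The mechanism is always the same: the low-frequency factor goes into $L^{\infty}$ while the high-frequency factor keeps the integrability of its parent, and Lemma \ref{ber} is used both to raise the integrability from $p_2$ to $p_1$ (here $p_1\ge p_2$ is essential, and costs $2^{2k(\frac{1}{p_2}-\frac{1}{p_1})}$ horizontally, $2^{j(\frac{1}{p_2}-\frac{1}{p_1})}$ vertically) and to pass the low-frequency factor into $L^{\infty}$ (costing $2^{2k'/p_i}$ resp. $2^{j'/p_i}$ in the summed $S_{k-1}^h,S_{j-1}^v$). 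The condition $\frac{1}{p_1}+\frac{1}{p_2}\le 1$ is what makes the remainder term well defined, via Hölder into $L^r$ with $\frac{1}{r}=\frac{1}{p_1}+\frac{1}{p_2}$ followed by Bernstein $L^r\hookrightarrow L^{p_1}$. A short computation confirms that, whichever factor carries the low frequency, the homogeneities combine to give exactly the loss $-\frac{2}{p_2}$ in the horizontal index and $-\frac{1}{p_2}$ in the vertical index, the $2$ and the $1$ being the horizontal ($\R^2$) and vertical ($\R$) dimensions in Bernstein.

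Then I would reassemble the target Besov norm. Writing each block in the normalized form $\norm{\Delta_k^h\Delta_j^v(ab)}_{L^{p_1}}\lesssim 2^{-k(s_1+s_2-\frac{2}{p_2})}2^{-j(\sigma_1+\sigma_2-\frac{1}{p_2})}c_{k,j}\,\norm a_{\dot B^{s_1,\sigma_1}_{p_1,q}}\norm b_{\dot B^{s_2,\sigma_2}_{p_2,q}}$, the point is that the double sequence $c_{k,j}$ is $\ell^q_k(\ell^q_j)$-summable after a Young convolution. Convergence of the low-frequency sums $S^h_{k-1},S^v_{j-1}$ requires precisely $s_1<\frac{2}{p_1}$, $s_2<\frac{2}{p_2}$ and $\sigma_1<\frac{1}{p_1}$, $\sigma_2<\frac{1}{p_2}$ (strict, or non-strict when $q=1$), while convergence of the remainder sums $\sum_{k'\ge k-N_0}$ requires the positivity conditions $s_1+s_2>0$ and $\sigma_1+\sigma_2>0$ — exactly the hypotheses stated.

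The main obstacle I expect is the bookkeeping in the mixed blocks $T^hR^v$ and $R^hT^v$: there the two directions obey different summation regimes simultaneously (a geometric sum governed by an upper threshold in one variable and by a positivity threshold in the other), so one must check that the resulting double sequence remains $\ell^q_k(\ell^q_j)$-summable and that the Young and Minkowski inequalities are applied in the correct order without destroying the anisotropy. The endpoint $q=1$, in which the regularity inequalities are allowed to be non-strict, is the most delicate and forces the use of the sharp endpoint form of Lemma \ref{ber}.
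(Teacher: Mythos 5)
The paper does not actually prove Lemma \ref{produclaw}: it is quoted verbatim as Lemma 4.5 of \cite{Chemin2} (Chemin--Zhang), and the author explicitly states that only the last three lemmas of the appendix are proved. Your outline is, in substance, the standard proof given in that reference: a double Bony decomposition (horizontal, then vertical), anisotropic Bernstein (Lemma \ref{ber}) to send low frequencies to $L^\infty$ and to raise the integrability of the $p_2$-factor to $p_1$ (whence the losses $2/p_2$ horizontally and $1/p_2$ vertically, reflecting the dimensions $2$ and $1$), H\"older with $\frac{1}{p_1}+\frac{1}{p_2}\le 1$ for the remainders, and the stated strict (resp.\ non-strict for $q=1$) inequalities governing the convergence of the $S^h_{k-1}$, $S^v_{j-1}$ sums and the positivity conditions governing the remainder sums. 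So the approach is correct and consistent with the cited source. Two small caveats: since $p_1\neq p_2$ in general, $T^h_ab$ and $T^h_ba$ are genuinely different terms (the high-frequency factor sits in $L^{p_2}$ in one case and in $L^{p_1}$ in the other), so the ``symmetry'' reduction to four blocks should be read as ``parallel computations'' rather than an actual identification; and at the endpoint $q=1$ what saves the non-strict inequalities is simply the $\ell^1$-summability of the dyadic coefficients in the telescoped low-frequency sums, not a sharper form of Bernstein.
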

 A very useful lemma in the anisotropic context (lemma 4.3 from \cite{Chemin2}), is the following
\begin{lemma}\label{embedding-anisotropic}
For any $s$ positive, for all $(p,q)\in [1,\infty]$ and any $\theta \in ]0,s[$, we have 
\begin{equation*}
\norm f_{(\dot{B}^{s-\theta}_{p,q})_h(\dot{B}^{\theta}_{p,1})_v} \lesssim  \norm f_{\dot{B}^s_{p,q}}.
\end{equation*}
\end{lemma}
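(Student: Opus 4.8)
The plan is to compare the anisotropic Littlewood--Paley decomposition with the isotropic one. Write $\dot\Delta_n$ for the isotropic dyadic blocks, so that $\norm f_{\dot{B}^s_{p,q}} = \big\| 2^{ns}\norm{\dot\Delta_n f}_{L^p}\big\|_{\ell^q_n}$, and set $a_n \bydef 2^{ns}\norm{\dot\Delta_n f}_{L^p}$, a sequence whose $\ell^q$ norm is precisely $\norm f_{\dot{B}^s_{p,q}}$. By definition, the left-hand side is the $\ell^q_k\big(\ell^1_j\big)$ norm of $2^{k(s-\theta)}2^{j\theta}\norm{\Delta^h_k\Delta^v_j f}_{L^p}$ (recall $L^p_hL^p_v = L^p(\mathbb{R}^3)$). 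So everything reduces to controlling $\norm{\Delta^h_k\Delta^v_j f}_{L^p}$ by the isotropic coefficients $a_n$.

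First I would establish the key frequency-support estimate. Since $\Delta^h_k\Delta^v_j$ localizes the Fourier support to $\{|\xi_h|\sim 2^k,\ |\xi_v|\sim 2^j\}$, and on this set $|\xi|\sim\max(2^k,2^j)=2^{\max(k,j)}$, there is a fixed integer $N_0$ such that $\Delta^h_k\Delta^v_j f = \Delta^h_k\Delta^v_j \sum_{|m-\max(k,j)|\le N_0}\dot\Delta_m f$. Using that $\Delta^h_k$ and $\Delta^v_j$ are convolution operators with uniformly ($L^1$-)bounded kernels, hence bounded on $L^p$ uniformly in $k,j$ for every $p\in[1,\infty]$, this yields $\norm{\Delta^h_k\Delta^v_j f}_{L^p}\lesssim \sum_{|m-\max(k,j)|\le N_0}\norm{\dot\Delta_m f}_{L^p}\lesssim 2^{-s\max(k,j)} a^\sharp_{\max(k,j)}$, where $a^\sharp_n \bydef \sum_{|m-n|\le N_0} a_m$ satisfies $\norm{(a^\sharp_n)}_{\ell^q}\lesssim \norm{(a_n)}_{\ell^q}$.

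Next I would carry out the summation. Plugging in, the summand becomes $2^{k(s-\theta)}2^{j\theta}2^{-s\max(k,j)}a^\sharp_{\max(k,j)}$, and I split the inner $\ell^1_j$ sum according to $j\le k$ and $j>k$. For $j\le k$ the summand equals $2^{-(k-j)\theta}a^\sharp_k$, and summing the geometric series over $j\le k$ converges thanks to $\theta>0$, contributing $\lesssim a^\sharp_k$. For $j>k$ it equals $2^{-(j-k)(s-\theta)}a^\sharp_j$, and summing over $j>k$ is a discrete convolution of $(a^\sharp_j)$ against the sequence $\big(2^{-i(s-\theta)}\big)_{i\ge 1}$, which lies in $\ell^1$ precisely because $s-\theta>0$. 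Taking then the outer $\ell^q_k$ norm, the first contribution gives $\norm{(a^\sharp_k)}_{\ell^q}\lesssim\norm f_{\dot{B}^s_{p,q}}$, and the second is handled by Young's inequality for discrete convolution, bounded by $\big\|(2^{-i(s-\theta)})\big\|_{\ell^1}\norm{(a^\sharp_j)}_{\ell^q}\lesssim\norm f_{\dot{B}^s_{p,q}}$. This proves the claim, with the two hypotheses $\theta>0$ and $\theta<s$ each being used exactly once.

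The main obstacle is the frequency-support step: one must verify that the anisotropic box $\{|\xi_h|\sim 2^k,\ |\xi_v|\sim 2^j\}$ sits inside a bounded number of isotropic annuli centered at scale $2^{\max(k,j)}$, and that the (uniform in scale) $L^p$-boundedness of the anisotropic projections lets one trade the anisotropic block for the isotropic ones without loss. Once this reduction is in place, the remaining work is the elementary geometric/convolution bookkeeping, where the gains $2^{-(k-j)\theta}$ (for $j\le k$) and $2^{-(j-k)(s-\theta)}$ (for $j>k$) make the double sum summable exactly under $0<\theta<s$.
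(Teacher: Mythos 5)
Your argument is correct: the reduction $\Delta^h_k\Delta^v_j f=\Delta^h_k\Delta^v_j\sum_{|m-\max(k,j)|\le N_0}\dot\Delta_m f$ is valid since $|\xi|\sim\max(|\xi_h|,|\xi_v|)$ on the anisotropic block, the uniform $L^p$-boundedness of the truncations is standard, and the geometric/Young bookkeeping with gains $2^{-(k-j)\theta}$ and $2^{-(j-k)(s-\theta)}$ uses $0<\theta<s$ exactly as needed. Note that the paper itself does not prove this lemma but only cites it as Lemma 4.3 of \cite{Chemin2}; your proof is essentially the standard argument given there, so there is nothing to reconcile.
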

 Finally, we recall lemma A.2 from \cite{Isab}
\begin{lemma}\label{lemma-from-isab}
For any function $a$ in the space $\dot{H}^{\frac{1}{2} + s}(\mathbb{R}^3)$ with $\frac{1}{2} \leq s < 1$, there holds
\begin{equation*}
\norm a_{L^{\infty}_v(\dot{H}^{s}_h)} \leq \sqrt{2} \norm a_{\dot{H}^{\frac{1}{2}+s}(\mathbb{R}^3)}.
\end{equation*}
\end{lemma}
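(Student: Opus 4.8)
The plan is to take the partial Fourier transform $\mathcal{F}_h$ in the horizontal variable $x_h$ and thereby reduce the statement to a one–dimensional inequality in $x_3$, fibered over the horizontal frequency $\xi_h$. Writing $b(\xi_h,x_3)\bydef \mathcal{F}_h a(\cdot,x_3)(\xi_h)$, the definition of the horizontal norm gives
\begin{equation*}
\norm a_{L^{\infty}_v(\dot{H}^s_h)}^2=\sup_{x_3\in\mathbb{R}}\int_{\mathbb{R}^2}|\xi_h|^{2s}|b(\xi_h,x_3)|^2\,d\xi_h\le\int_{\mathbb{R}^2}|\xi_h|^{2s}\Big(\sup_{x_3\in\mathbb{R}}|b(\xi_h,x_3)|^2\Big)\,d\xi_h,
\end{equation*}
so everything reduces to controlling $\sup_{x_3}|b(\xi_h,\cdot)|$ for a.e. fixed $\xi_h$. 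By density I would first argue for $a\in\mathcal{S}(\mathbb{R}^3)$, so that $b(\xi_h,\cdot)\in H^1(\mathbb{R})$ for every $\xi_h$, and recover the general case by passing to the limit; this is where the upper constraint $\frac12+s<\frac32$, i.e. $s<1$, is comfortable, since $\dot{H}^{\frac12+s}(\mathbb{R}^3)$ is then a genuine function space in which Schwartz functions are dense.

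For each fixed $\xi_h$ I would apply the elementary one–dimensional inequality $\norm g_{L^{\infty}(\mathbb{R})}^2\le 2\,\norm g_{L^2(\mathbb{R})}\norm{g'}_{L^2(\mathbb{R})}$, which follows at once from $|g(x_3)|^2=2\int_{-\infty}^{x_3}\mathrm{Re}(\bar g\,g')$, to $g=b(\xi_h,\cdot)$. Combined with the vertical Plancherel identities $\int_{\mathbb{R}}|b(\xi_h,x_3)|^2\,dx_3=\int_{\mathbb{R}}|\hat a(\xi_h,\xi_3)|^2\,d\xi_3$ and $\int_{\mathbb{R}}|\partial_3 b(\xi_h,x_3)|^2\,dx_3=\int_{\mathbb{R}}\xi_3^2|\hat a(\xi_h,\xi_3)|^2\,d\xi_3$, this yields
\begin{equation*}
\sup_{x_3\in\mathbb{R}}|b(\xi_h,x_3)|^2\le 2\Big(\int_{\mathbb{R}}|\hat a(\xi_h,\xi_3)|^2\,d\xi_3\Big)^{\frac12}\Big(\int_{\mathbb{R}}\xi_3^2|\hat a(\xi_h,\xi_3)|^2\,d\xi_3\Big)^{\frac12}.
\end{equation*}

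The key step is then to convert this anisotropic product into the isotropic weight. Multiplying by $|\xi_h|^{2s}$ and splitting the geometric mean via $2\sqrt{AB}\le\lambda A+\lambda^{-1}B$ with the choice $\lambda=|\xi_h|$, I would reach
\begin{equation*}
|\xi_h|^{2s}\sup_{x_3\in\mathbb{R}}|b(\xi_h,x_3)|^2\le\int_{\mathbb{R}}\big(|\xi_h|^{2s+1}+|\xi_h|^{2s-1}\xi_3^2\big)|\hat a(\xi_h,\xi_3)|^2\,d\xi_3.
\end{equation*}
The crux is the pointwise comparison
\begin{equation*}
|\xi_h|^{2s+1}+|\xi_h|^{2s-1}\xi_3^2\le 2\,\big(|\xi_h|^2+\xi_3^2\big)^{s+\frac12},
\end{equation*}
which, after factoring out $|\xi_h|^{2s+1}$ and setting $u=\xi_3^2/|\xi_h|^2\ge0$, reduces to $(1+u)^{\frac12-s}\le 2$. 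This holds for every $u\ge0$ precisely because $s\ge\frac12$ makes the exponent nonpositive (so in fact $(1+u)^{\frac12-s}\le1$, showing the constant is not even optimized here). Integrating in $\xi_3$, then in $\xi_h$, and recalling $\norm a_{\dot{H}^{\frac12+s}}^2=\int_{\mathbb{R}^3}(|\xi_h|^2+\xi_3^2)^{s+\frac12}|\hat a|^2\,d\xi$, I obtain $\norm a_{L^{\infty}_v(\dot{H}^s_h)}^2\le 2\,\norm a_{\dot{H}^{\frac12+s}}^2$, hence the announced bound after taking square roots.

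I expect the genuine obstacle to be the pointwise weight inequality of the last display, rather than any of the functional–analytic steps: it is exactly there that the lower constraint $s\ge\frac12$ is \emph{forced}, since for $s<\frac12$ the factor $(1+u)^{\frac12-s}$ is unbounded as $u\to\infty$ and the fiberwise estimate breaks down on frequencies with $|\xi_3|\gg|\xi_h|$. The rest is bookkeeping: the exchange $\sup\!\int\le\int\!\sup$ (monotonicity), the fiberwise application of the one–dimensional inequality (legitimate on the dense Schwartz class and stable in the limit), and the tracking of the constant, which comes out to $2$ inside the squared estimate and therefore to $\sqrt2$ in the stated form.
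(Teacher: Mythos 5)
Your proof is correct. Note that the paper itself does not prove this lemma: it is one of the four results in the appendix for which only a reference is given (``we recall lemma A.2 from \cite{Isab}''), so there is no in-paper argument to compare against. Your argument --- partial Fourier transform in $x_h$, the one-dimensional Agmon inequality $\norm{g}_{L^\infty}^2\le 2\norm{g}_{L^2}\norm{g'}_{L^2}$ applied fiberwise in $\xi_h$, Young's inequality with the frequency-dependent weight $\lambda=|\xi_h|$, and the pointwise comparison $|\xi_h|^{2s-1}(|\xi_h|^2+\xi_3^2)\le(|\xi_h|^2+\xi_3^2)^{s+\frac12}$ forced by $s\ge\frac12$ --- is exactly the standard proof of this anisotropic trace estimate and the one underlying the cited source; your identification of where each of the two constraints on $s$ enters (the lower bound in the pointwise weight inequality, the upper bound in the density of Schwartz functions in $\dot{H}^{\frac12+s}(\mathbb{R}^3)$) is accurate, and the constant you track is consistent with the stated $\sqrt{2}$.
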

 Next, we will prove an interpolation version in space-time spaces
\begin{lemma}\label{interpolation-lemma}
For all $p\in ]2, \infty[$, there exists a constant $c_p>0$, such that for all $u$ in $L^{\infty}_T(L^2(\mathbb{R}^3)) \cap L^2_T(\dot{H}^1(\mathbb{R}^3))$ we have
\begin{equation*}
\norm {u}_{L^p_T(\dot{B}^{\frac{2}{p}}_{2,1}(\mathbb{R}^3))} \leq c_p \norm u_{L^2_T(\dot{H}^1(\mathbb{R}^3))}^{\frac{2}{p}} \norm u_{L^{\infty}_T(L^2(\mathbb{R}^3))}^{1-\frac{2}{p}}.
\end{equation*}
\end{lemma}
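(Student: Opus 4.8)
The plan is to deduce this space--time estimate from a purely spatial interpolation inequality applied at each fixed time $t$, and then to integrate in the time variable. Concretely, I would first establish that for every $v\in L^2(\mathbb{R}^3)\cap \dot H^1(\mathbb{R}^3)$ one has
\begin{equation*}
\norm{v}_{\dot B^{\frac{2}{p}}_{2,1}(\mathbb{R}^3)} \lesssim \norm{v}_{\dot H^1(\mathbb{R}^3)}^{\frac{2}{p}}\,\norm{v}_{L^2(\mathbb{R}^3)}^{1-\frac{2}{p}},
\end{equation*}
with an implicit constant depending only on $p$. Granting this pointwise-in-time bound, the conclusion follows quickly: raising it to the power $p$ gives $\norm{v}_{\dot B^{2/p}_{2,1}}^p \lesssim \norm{v}_{\dot H^1}^2\,\norm{v}_{L^2}^{p-2}$, since $\frac{2}{p}\cdot p=2$ and $(1-\frac{2}{p})p=p-2$. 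Applying this to $v=u(t,\cdot)$ and integrating over $[0,T]$ yields
\begin{equation*}
\norm{u}_{L^p_T(\dot B^{2/p}_{2,1})}^p \lesssim \int_0^T \norm{u(t)}_{\dot H^1}^2\,\norm{u(t)}_{L^2}^{p-2}\,dt \le \norm{u}_{L^\infty_T(L^2)}^{p-2}\int_0^T\norm{u(t)}_{\dot H^1}^2\,dt,
\end{equation*}
which is exactly $\norm{u}_{L^2_T(\dot H^1)}^{2}\,\norm{u}_{L^\infty_T(L^2)}^{p-2}$ up to the constant; taking $p$-th roots produces the claimed inequality.

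The heart of the matter is thus the spatial inequality, which I would prove by a dyadic splitting at an optimized threshold. For an integer $N$ to be chosen, I write
\begin{equation*}
\norm{v}_{\dot B^{2/p}_{2,1}} = \sum_{q\le N} 2^{\frac{2q}{p}}\norm{\Delta_q v}_{L^2} + \sum_{q> N} 2^{\frac{2q}{p}}\norm{\Delta_q v}_{L^2}.
\end{equation*}
For the low frequencies, Cauchy--Schwarz in $q$ together with the summability of $\sum_{q\le N}2^{4q/p}$ gives $\sum_{q\le N}2^{2q/p}\norm{\Delta_q v}_{L^2}\lesssim 2^{2N/p}\norm{v}_{L^2}$. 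For the high frequencies, I factor $2^{2q/p}=2^{q(\frac2p-1)}2^{q}$ and invoke the Bernstein bound $2^q\norm{\Delta_q v}_{L^2}\lesssim \norm{\Delta_q v}_{\dot H^1}$ from Lemma \ref{ber}; Cauchy--Schwarz then gives $\sum_{q>N}2^{2q/p}\norm{\Delta_q v}_{L^2}\lesssim 2^{N(\frac2p-1)}\norm{v}_{\dot H^1}$, where convergence of $\sum_{q>N}2^{2q(\frac2p-1)}$ uses $\frac2p-1<0$. Choosing $N$ so that $2^{N}\simeq \norm{v}_{\dot H^1}/\norm{v}_{L^2}$ balances the two contributions and delivers the spatial inequality.

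The delicate point — and the only place where the structure of the statement really matters — is the $\ell^1$-summation hidden in the Besov index $1$ of $\dot B^{2/p}_{2,1}$: interpolating each dyadic block separately would produce an $\ell^{2/p}$ sum over $q$ that cannot be controlled by $\norm{v}_{\dot H^1}$, since $\frac2p<1$. What rescues the argument is precisely that the target regularity lies \emph{strictly} between the endpoints, i.e. $0<\frac2p<1$ for $p\in\,]2,\infty[$; this strict inequality makes both geometric series above converge, and the optimization over the cut-off $N$ recovers the sharp interpolation exponents with no logarithmic loss. A minor technical care is that $N$ should be taken as the integer nearest to $\log_2(\norm{v}_{\dot H^1}/\norm{v}_{L^2})$, the rounding affecting only $c_p$, with the degenerate case $v=0$ being trivial.
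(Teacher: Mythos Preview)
Your proof is correct and follows essentially the same approach as the paper: a dyadic low/high frequency splitting at a threshold $N$, with the low part controlled by $\norm{v}_{L^2}$ and the high part (via Bernstein, Lemma~\ref{ber}) by $\norm{v}_{\dot H^1}$, then optimization in $N$ to obtain the pointwise-in-time inequality, followed by integration in time. The only cosmetic difference is that the paper bounds each partial sum directly rather than invoking Cauchy--Schwarz on the geometric series, but the argument and the resulting estimate are the same.
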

\begin{proof}[\textit{Proof}]
The proof is classical, we proceed as the following:\\
Let $N(t)>0$ to be fixed later, we use lemma \ref{ber} and Cauchy-Swartz inequality, to write
\begin{align*}
\sum_{j\in \mathbb{Z}} 2^{\frac{2}{p}} \norm {\Delta_j u(t,.)}_{L^2} &= \sum_{j\leq N(t)} 2^{j\frac{2}{p}} \norm {\Delta_j u(t,.)}_{L^2} +\sum_{j>N(t)} 2^{j(\frac{2}{p}-1)} \norm {\Delta_j \nabla u(t,.)}_{L^2} \\
&\leq 2^{\frac{2}{p}N(t)} \norm {u(t,.)}_{L^2} + 2^{(\frac{2}{p}-1)N(t)} \norm {\nabla u(t,.)}_{L^2}.
\end{align*} 
The choice of $N(t)$ such that
\begin{equation*}
2^{N(t)} \bydef \bigg( \frac{1-\frac{2}{p}}{\frac{2}{p}} \bigg) \frac{\norm {\nabla u(t,.)}_{L^2}}{\norm {u(t,.)}_{L^2}}
\end{equation*}
gives
\begin{equation}\label{ineterpolation-inequa}
\norm {u(t,.)}_{\dot{B}^{\frac{2}{p}}_{2,1}(\mathbb{R}^3))} \leq c_p \norm {u(t,.)}_{\dot{H}^1(\mathbb{R}^3)}^{\frac{2}{p}} \norm {u(t,.)}_{L^2(\mathbb{R}^3)}^{1-\frac{2}{p}}.
\end{equation}
The lemma follows by taking the $L^p$ norm in time.
\end{proof}
 The following lemma can be used when the horizontal frequencies control the vertical ones
\begin{lemma}\label{goodfrequencieslemma}
Let $s,t$ be two real numbers, let $f$ be a regular function, we define $f^{\textbf{G}}$ as
$$ f^{\textbf{G}} \overset{\text{def}}{=} \sum_{q\leq k} \Delta_k^h \Delta_q^v  f. $$
Then we have:
\begin{equation*}
\norm {\partial_3 f^{\textbf{G}}}_{\dot{H}^{s,t}} \lesssim \norm {\nabla_h f}_{\dot{H}^{s,t}}.
\end{equation*}
\end{lemma}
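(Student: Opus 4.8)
The plan is to pass to the Littlewood--Paley characterization of the anisotropic norm and to exploit the frequency constraint $q\le k$ that defines $f^{\textbf{G}}$. Recalling from the Appendix that $\dot{B}^{s,t}_{2,2}\approx \dot{H}^{s,t}$, I would start from the equivalence
$$\norm{\partial_3 f^{\textbf{G}}}_{\dot{H}^{s,t}}^2 \approx \sum_{q\le k} 2^{2ks}2^{2qt}\norm{\Delta_k^h\Delta_q^v\partial_3 f}_{L^2}^2,$$
where, by the very definition of $f^{\textbf{G}}$ together with the (quasi-)orthogonality of the dyadic blocks, the double sum ranges only over the pairs $(k,q)$ with $q\le k$.

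The key step is to trade the vertical derivative $\partial_3$ for a horizontal gain, using the frequency localization. Since $\Delta_q^v$ localizes the vertical frequency in the ring $2^q\mathcal{C}_v$, the vertical Bernstein inequality (Lemma \ref{ber}) gives $\norm{\Delta_k^h\Delta_q^v\partial_3 f}_{L^2}\lesssim 2^q\norm{\Delta_k^h\Delta_q^v f}_{L^2}$. On the restricted index set we have $q\le k$, hence $2^q\le 2^k$, so I may replace the vertical factor by a horizontal one:
$$2^q\norm{\Delta_k^h\Delta_q^v f}_{L^2}\le 2^k\norm{\Delta_k^h\Delta_q^v f}_{L^2}\lesssim \norm{\Delta_k^h\Delta_q^v\nabla_h f}_{L^2},$$
the last inequality again by Lemma \ref{ber}, now in the horizontal variable: the horizontal block lives on the ring $2^k\mathcal{C}_h$, so there $2^k$ and $\nabla_h$ are comparable.

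Combining these estimates yields
$$\norm{\partial_3 f^{\textbf{G}}}_{\dot{H}^{s,t}}^2 \lesssim \sum_{q\le k} 2^{2ks}2^{2qt}\norm{\Delta_k^h\Delta_q^v\nabla_h f}_{L^2}^2 \le \sum_{k,q\in\mathbb{Z}} 2^{2ks}2^{2qt}\norm{\Delta_k^h\Delta_q^v\nabla_h f}_{L^2}^2 \approx \norm{\nabla_h f}_{\dot{H}^{s,t}}^2,$$
where in the middle I simply drop the constraint $q\le k$ (licit, since all summands are nonnegative) and the final equivalence is once more the Littlewood--Paley description of $\dot{H}^{s,t}$. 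This is exactly the claimed bound. There is no genuine obstacle here: the entire content is the observation that on the frequency region $\{|\xi_v|\lesssim|\xi_h|\}$ the symbol of $\partial_3$ is dominated by that of $\nabla_h$. The only points requiring a little care are the use of the ring (rather than ball) localization, which makes the horizontal Bernstein estimate two-sided, and the harmlessness of enlarging the summation index set once the summands are nonnegative.
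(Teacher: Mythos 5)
Your proof is correct and takes essentially the same route as the paper's: both reduce to the observation that on the frequency support of $f^{\textbf{G}}$ one has $|\xi_v|\lesssim 2^q\le 2^k\lesssim|\xi_h|$, the paper expressing this directly as a pointwise bound on the multiplier inside the Plancherel integral while you phrase it blockwise via the vertical and horizontal Bernstein inequalities. The only cosmetic caveat is that, since the $\Delta$'s are not exact projections, $\Delta_k^h\Delta_q^v f^{\textbf{G}}$ is supported on $q\le k+O(1)$ rather than exactly $q\le k$, which you implicitly acknowledge and which is harmless for the estimate.
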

\begin{proof}[\textit{Proof}]
Let us use Plancherel-Parseval identity to write:
\begin{equation}\label{A1}
\norm {\partial_3 f^{\textbf{G}}}_{H^{s,t}} ^2 \approx \int_{\mathbb{R}^3} |\xi_h|^{2s} |\xi_v|^{2t} \bigg|\sum_{q \leq k} |\xi_v|\varphi_k^h(\xi)\varphi_q^v(\xi)\bigg|^2 |\hat{f}(\xi)|^2 d\xi,
\end{equation}
where: $\varphi_k^h \bydef \varphi (2^{-k} |\xi_h|) $, $\varphi_q^v \bydef \varphi (2^{-q} |\xi_v|) $, and $\varphi$ is the function defined at the beginning of the Appendix part.
Thus, using the support properties of $\varphi_k^h$, $\varphi_q^v$, and the condition $q<k$, we infer that, for all $\xi=(\xi_h,\xi_v)\in Supp(\varphi_k^h) \times supp(\varphi_q^v)$
\begin{equation}\label{A2}
|\xi_v|\lesssim 2^{q} \leq 2^{k} \lesssim |\xi_h|
\end{equation}
plugging \eqref{A2} into \eqref{A1} concludes the proof of the lemma.
\end{proof}
 The last lemma that we will prove is useful to estimate some parts of the anisotropic Bony's decomposition for functions having dominated vertical frequencies compared to the horizontal ones, and which are supported away from zero horizontally in Fourier side.
\begin{lemma}\label{badfrequencieslemma}
Let $f$ be regular function, and $E>0$. We define $f^{\textbf{B}}_{\sharp,E^{-1}}$ as
$$ f^{\textbf{B}}_{\sharp,E^{-1}} \overset{\text{def}}{=} \sum_{k< q} \Delta_k^h \Delta_q^v f_ {\sharp,E^{-1}},$$
where
$$ f_ {\sharp,E^{-1}}\overset{\text{def}}{=}  \mathcal{F}^{-1} \big(\mathbf{1}_{B_h^c(0,E^{-1}} )\hat{f} \big).$$
Then we have the following estimates
\begin{equation}\label{bad1}
\norm {\Delta_q^v S_{j-1}^h (f^{\textbf{B}}_{\sharp,E^{-1}})}_{L^2_v(L^{\infty}_h)} \lesssim \big( log(E2^q+e)\big)^{\frac{1}{2}} c_q \norm {\nabla_h f}_{L^2(\mathbb{R}^3)},
\end{equation}
\begin{equation}\label{bad2}
\norm {S_{q-1}^v S_{j-1}^h (f^{\textbf{B}}_{\sharp,E^{-1}})}_{L^{\infty}_v(L^{\infty}_h)} \lesssim \big( log(E2^q+e)\big)^{\frac{1}{2}} c_q 2^{\frac{q}{2}} \norm {\nabla_h f}_{L^2(\mathbb{R}^3)}.
\end{equation}
\end{lemma}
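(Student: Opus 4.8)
The plan is to deduce both inequalities from a single observation about the admissible range of horizontal frequencies, followed by the anisotropic Bernstein estimates of Lemma \ref{ber} and almost-orthogonality. First I would expand the two cut-offs into dyadic pieces: writing $f^{\textbf{B}}_{\sharp,E^{-1}}=\sum_{k<q'}\Delta_k^h\Delta_{q'}^v f_{\sharp,E^{-1}}$, the block $\Delta_k^h\Delta_{q'}^v f^{\textbf{B}}_{\sharp,E^{-1}}$ is nonzero only when three constraints hold at once: the bad condition $k<q'$; the sharp condition $2^k\gtrsim E^{-1}$ (indeed $\Delta_k^h f_{\sharp,E^{-1}}=0$ for $2^k\ll E^{-1}$, while $\norm{\Delta_k^h f_{\sharp,E^{-1}}}_{L^2}\le\norm{\Delta_k^h f}_{L^2}$ otherwise, since the sharp multiplier only decreases the Fourier modulus); and the cut-off $k\le j-2$ coming from $S_{j-1}^h$. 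The last one merely shrinks the range of summation, hence is harmless and explains the uniformity in $j$. The crucial point is that the first two constraints confine $k$ to the finite window $-\log_2 E\lesssim k<q'$, whose cardinality is $\lesssim \log(E2^{q'}+e)$ --- and which is empty, so that both sides vanish, when $E2^{q'}\le 1$.

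For \eqref{bad1} I would take the vertical index equal to $q$ and estimate, by the triangle inequality and horizontal Bernstein (in $\mathbb{R}^2_h$ the gain $L^2_h\hookrightarrow L^\infty_h$ costs $2^{k}$),
$$\norm{\Delta_q^v S_{j-1}^h f^{\textbf{B}}_{\sharp,E^{-1}}}_{L^2_v(L^\infty_h)}\lesssim \sum_{2^k\gtrsim E^{-1},\,k<q}2^{k}\norm{\Delta_k^h\Delta_q^v f}_{L^2}.$$
Cauchy--Schwarz over this window then produces the factor $(\#\{k\})^{1/2}\lesssim(\log(E2^q+e))^{1/2}$ multiplying $\big(\sum_k 2^{2k}\norm{\Delta_k^h\Delta_q^v f}_{L^2}^2\big)^{1/2}=:c_q\norm{\nabla_h f}_{L^2}$, where $\sum_q c_q^2\lesssim1$ by Plancherel. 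This is exactly \eqref{bad1}.

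For \eqref{bad2} the only change is an extra vertical Bernstein factor and an outer summation. Expanding $S_{q-1}^v=\sum_{q'\le q-2}\Delta_{q'}^v$ and using the vertical gain $L^2_v\hookrightarrow L^\infty_v$ (cost $2^{q'/2}$) I would reach
$$\norm{S_{q-1}^v S_{j-1}^h f^{\textbf{B}}_{\sharp,E^{-1}}}_{L^\infty_v(L^\infty_h)}\lesssim\sum_{q'\le q-2}2^{q'/2}\sum_{2^k\gtrsim E^{-1},\,k<q'}2^k\norm{\Delta_k^h\Delta_{q'}^v f}_{L^2}.$$
Treating the inner $k$-sum exactly as above gives $2^{q'/2}(\log(E2^{q'}+e))^{1/2}c_{q'}\norm{\nabla_h f}_{L^2}$ with $(c_{q'})\in\ell^2$. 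Since $q'\le q$, I would bound $\log(E2^{q'}+e)\le\log(E2^{q}+e)$ and factor $2^{q'/2}=2^{q/2}2^{(q'-q)/2}$; the remaining sum $\sum_{q'<q}2^{(q'-q)/2}c_{q'}$ is the convolution of an $\ell^1$ geometric sequence with an $\ell^2$ one, hence defines a new sequence $c_q$ with $\sum_q c_q^2\lesssim1$ by Young's inequality. This yields \eqref{bad2}.

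The routine ingredients are the Bernstein inequalities and the Plancherel identity $\norm{\nabla_h f}_{L^2}^2\approx\sum_{k,q}2^{2k}\norm{\Delta_k^h\Delta_q^v f}_{L^2}^2$. The genuine point --- and the only place the structure of the problem is used --- is the logarithmic factor: it appears precisely because the sharp cut $\mathbf{1}_{B_h^c(0,E^{-1})}$ truncates the horizontal frequencies from below at $E^{-1}$, while the bad-frequency condition truncates them from above at the vertical scale $2^q$, so that only $\sim\log(E2^q)$ dyadic horizontal scales survive; Cauchy--Schwarz over these scales turns their cardinality into the $\tfrac12$-power of the logarithm. Without the lower cut-off at $E^{-1}$ this $k$-sum would diverge, which is the whole reason for introducing the parameter $E$.
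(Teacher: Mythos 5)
Your proposal is correct and follows essentially the same route as the paper: isolate the finite window $E^{-1}\lesssim 2^k\lesssim 2^{q}$ of surviving horizontal frequencies, apply horizontal Bernstein at cost $2^k$ and Cauchy--Schwarz over that window to produce the $(\log(E2^q+e))^{1/2}$ factor for \eqref{bad1}, then obtain \eqref{bad2} from \eqref{bad1} via vertical Bernstein and a Young-type convolution bound on $\sum_{q'\le q}2^{(q'-q)/2}c_{q'}$. The only cosmetic difference is that the paper factors the square-summable coefficients as $c_kc_q$ before summing, whereas you apply Cauchy--Schwarz directly to $\sum_k 2^{2k}\norm{\Delta_k^h\Delta_q^v f}_{L^2}^2$; the two are equivalent.
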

\begin{proof}[\textit{Proof}]
According to the support properties we have
\begin{align*}
\Delta_q^v S_{j-1}^h (f^{\textbf{B}}_{\sharp,E^{-1}})= \bigg(\Delta_q^v S_{j-1}^h \sum_{i\in\{-1,0,1 \}} S_{q-1+i}^h \Delta_{q+i}^v \bigg)f_ {\sharp,E^{-1}},
\end{align*}
therefore, Bernstein's inequality, we can write
\begin{align*}
\norm {\Delta_q^v S_{j-1}^h (f^{\textbf{B}}_{\sharp,E^{-1}})}_{L^2_v(L^{\infty}_h)} &\lesssim \sum_{E^{-1}\lesssim 2^k \lesssim 2^q} 2^k \norm {\Delta_k^h \Delta_q^v f}_{L^2(\mathbb{R}^3)}\\
&\lesssim \bigg(\sum_{E^{-1}\lesssim 2^k \lesssim 2^q} c_k \bigg)c_q \norm {\nabla_h f}_{L^2(\mathbb{R}^3)}\\
&\lesssim \big( log(E2^q+e)\big)^{\frac{1}{2}} c_q  \norm {\nabla_h f}_{L^2(\mathbb{R}^3)}.
\end{align*}
Thus the first inequality is proved. For the second one, we first write 
\begin{align*}
\norm {S_{q-1}^v S_{j-1}^h (f^{\textbf{B}}_{\sharp,E^{-1}})}_{L^{\infty}_v(L^{\infty}_h)} \lesssim \sum_{m\leq q} 2^{\frac{m}{2}}\norm {\Delta_m^v S_{j-1}^h (f^{\textbf{B}}_{\sharp,E^{-1}})}_{L^{2}_v(L^{\infty}_h)}.
\end{align*}
Inequality \eqref{bad1} gives then
\begin{align*}
2^{-\frac{q}{2}}\norm {S_{q-1}^v S_{j-1}^h (f^{\textbf{B}}_{\sharp,E^{-1}})}_{L^{\infty}_v(L^{\infty}_h)} &\lesssim \bigg( \sum_{m\leq q} 2^{\frac{1}{2} (m-q)} \big( log(E2^m+e)\big)^{\frac{1}{2}} c_m  \bigg)\norm {\nabla_h f}_{L^2(\mathbb{R}^3)}\\
&\lesssim \big( log(E2^q+e)\big)^{\frac{1}{2}} c_q  \norm {\nabla_h f}_{L^2(\mathbb{R}^3)}.
\end{align*}
Inequality \eqref{bad2} follows.
\end{proof}
\section*{Acknowledgment} 
 The author is very grateful to the referee for his/her valuable and helpful comments and remarks.

\bibliographystyle{elsarticle-num}
 
 \bibliography{mybibfile.bib}
 
\end{document}